\documentclass[a4paper,12pt]{article}

\addtolength{\topmargin}{-1.5 true cm} \addtolength{\textheight}{2.6
true cm} \addtolength{\textwidth}{1.2 true cm}

\usepackage{dsfont, amsmath, amsthm, amsfonts, amssymb,accents}
\usepackage{graphicx,color}



\theoremstyle{plain}
\newtheorem{thrm}{Theorem}[section]

\newtheorem{lmm}{Lemma}[section]

\theoremstyle{remark}
\newtheorem{rmrk}{Remark}[section]


\numberwithin{equation}{section}


\def\tht{\theta}
\def\Om{\Omega}
\def\om{\omega}
\def\e{\varepsilon}
\def\g{\gamma}
\def\G{\Gamma}
\def\l{\lambda}
\def\p{\partial}
\def\D{\Delta}

\def\vk{\varkappa}

\def\a{\alpha}
\def\b{\beta}

\def\d{\delta}

\def\z{\zeta}

\def\Odr{\mathcal{O}}
\def\H{W_2}

\def\Wo{W_{2,0}}
\def\k{\kappa}

\def\di{\,\mathrm{d}}

\def\vc{\boldsymbol}


\allowdisplaybreaks

\begin{document}
\title{Complete asymptotic expansions for eigenvalues of
Dirichlet Laplacian in thin three-dimensional rods}

\def\thefootnote{}

\author{D. Borisov$^1$, \footnote{D.B. was partially supported by RFBR (10-01-00118), 
by the grant of the President of Russia for young scientists-doctors
of sciences (MD-453.2010.1) and for Leading Scientific School
(NSh-6249.2010.1), and by the grant of FCT (ptdc/mat/101007/2008).}
  and G. Cardone$^2$}
\date{}
\maketitle

\begin{quote}
{\small {\em 1) Bashkir State Pedagogical University, October
Revolution
\\
\phantom{1) } St.~3a, 450000 Ufa, Russia
\\
2) University of Sannio, Department of Engineering,
\\
\phantom{2) } Corso Garibaldi, 107, 82100 Benevento, Italy }
\\
\phantom{1) }\texttt{borisovdi@yandex.ru},
\texttt{giuseppe.cardone@unisannio.it}}
\end{quote}

\begin{abstract}
We consider Dirichlet Laplacian in a thin curved three-dimensional
rod. The rod is finite. Its cross-section is constant and small, and
rotates along the reference curve in an arbitrary way. We find a
two-parametric set of the eigenvalues of such operator and construct
their complete asymptotic expansions. We show that this
two-parametric set contains any prescribed number of the first
eigenvalues of the considered operator. We obtain the complete
asymptotic expansions for the eigenfunctions associated with these
first eigenvalues.
\end{abstract}
%

\textbf{Mathematical Subject Classification:} {35P05, 35J05, 35B25,
35C20.}

\textbf{Keywords:} {thin rod, Dirichlet Laplacian, eigenvalue,
asymptotics}

 \maketitle
\section*{Introduction}

The asymptotics of the spectra of elliptic operators in thin domains
were studied by many authors, see, for instance,
\cite{BF1}-\cite{Kr}, 
\cite{Na4}, \cite{Pa3}, and the references therein. There are two
types of thin domains usually considered, namely, thin rods and thin
plates. Both types were considered in the book \cite{Na4}. The
eigenvalues for elliptic operators with Neumann boundary condition
on the lateral surface of the rods and on the bases of the plates
were studied. The asymptotic expansions for the eigenvalues and the
eigenfunctions were constructed and justified. We also mention the
survey \cite{Gr} on thin rods.

A thin two-dimensional domain formed by two different thin
rectangles was studied in \cite{Pa3}, i.e., the boundary of the
domain was non-smooth. The operator considered was Laplacian subject
to Neumann condition on the bases and to Dirichlet one on the
lateral boundary. The main results were the asymptotic expansions
for the eigenvalues remaining bounded as the width of the domain
tends to zero, and the asymptotics for the associated
eigenfunctions. These asymptotic expansions were rigorously
justified. Laplacian in a thin two-dimensional domain was also
considered in \cite{FS1}. The domain had a variable width with the
unique point of maximum. The uniform resolvent convergence was
established and two-terms asymptotics for the eigenvalues were
obtained, as well as convergence theorems for the associated
eigenfunctions. In \cite{FS2} these results were extended for an
infinite thin strip under similar conditions for the width. We also
mention the paper \cite{Kr}, where a thin strip (finite or infinite)
was considered with Neumann condition on the upper boundary and with
Dirichlet condition on the lower boundary. Here two-terms
asymptotics for the first eigenvalues were constructed. The case of
a curved infinite strip was also studied in \cite{DE}, where the
number of the discrete eigenvalues below the essential spectrum was
estimated. The results of \cite{FS1} were also extended in
\cite{BF1}. Here a two-parametric set of the eigenvalues was found
and their complete asymptotic expansions were constructed.

A finite three-dimensional rod was considered in \cite{BMT}. The
cross section was supposed to be constant and to rotate along the
reference curve in an arbitrary way. Two-terms asymptotics for the
first eigenvalues were constructed and convergence theorems for the
associated eigenfunctions were established. Similar results  were
obtained in \cite{FK} for a tube in a space of arbitrary dimension.
An infinite three-dimensional thin tube with a round cross section
was studied in \cite{DE}. The number of the discrete eigenvalues
below the essential spectrum was estimated and their complete
asymptotic expansions were constructed. We also mention the paper
\cite{CDN}, where a multi-dimensional thin cylinder with distorted
ends was considered. The operator studied was Laplacian in such
domain subject to Dirichlet condition on the lateral surface and to
Neumann one on the distorted ends. The attention was paid to the
localization effect of some eigenfunctions at the distorted ends.
The asymptotics for these eigenfunctions and the corresponding
eigenvalues were constructed.

In this paper we extend the results of \cite{BMT}. We again consider
Dirichlet Laplacian in a curved thin rod. The cross section of the
rod is a fixed domain, which rotates along the reference curve in an
arbitrary way. In what follows this operator, its eigenvalues and
eigenfunctions are referred to as the perturbed ones. We find a
two-parametric set of the perturbed eigenvalues and construct their
complete asymptotic expansions. The eigenvalues are indexed by the
first two terms of their asymptotic expansions. Namely, the leading
terms are determined by the eigenvalues of Dirichlet Laplacian on
the cross-section of the rod. Each of the leading terms determines a
certain operator on the reference curve, and its eigenvalues are the
next-to-leading terms of the aforementioned asymptotic expansions
for the perturbed eigenvalues. It is convenient to group the
perturbed eigenvalues into a countable set of series, where each
series consists of the perturbed eigenvalues with the same leading
term in the asymptotic expansions. We show that the series
associated with the smallest leading term contains any prescribed
number of the first eigenvalues of the perturbed operator provided
the rod is thin enough.
%
We prove that these eigenvalues are simple and construct complete
asymptotic expansions for the associated eigenfunctions.

In conclusion to this section, we describe briefly the contents of
the paper. The next section contains the description of the problem
and the main results. In the third section we introduce a change of
variables required for the constructing the asymptotic expansions.
In the fourth section we select the aforementioned two-parametric
series of the eigenvalues and construct their asymptotic expansions.
In the last fifth section we describe the first eigenvalues of the
perturbed operator and give the asymptotic expansions for the
associated eigenfunctions.

\section{Formulation of the problem and the main results}

Let $x=(x_1,x_2,x_3)$ be Cartesian coordinates in $\mathds{R}^3$,
$\g$ be a finite infinitely differentiable curve in $\mathds{R}^3$
without self-intersections. By $s$ and $s_0$ we denote the arc
length and the length of $\g$, $s\in[0,s_0]$. We parameterize $\g$
by its arc length, and $\vc{r}=\vc{r}(s)$ is the infinitely
differentiable vector describing $\g$. The tangential vector of $\g$
is indicated by $\vc{\tau}=\vc{\tau}(s)$. By
$\vc{\eta}=\vc{\eta}(s)$ we denote an infinitely differentiable in
$s\in[0,s_0]$ unit vector defined on $\g$ being orthogonal to
$\vc{\tau}(s)$ for all $s\in[0,s_0]$. We let
$\vc{\b}(s):=[\vc{\tau}(s),\vc{\eta}(s)]$. It is clear that
$\vc{\b}(s)$ is infinitely differentiable in $s\in[0,s_0]$, and
$(\vc{\tau},\vc{\eta},\vc{\b})$ is an orthonormalized frame on $\g$.
One of the possible choices of $\vc{\eta}$ is
\begin{equation}\label{1.0a}
\vc{\eta}(s):=\cos \a(s) \vc{n}(s)+\sin\a(s)\vc{b}(s),
\end{equation}
where $\vc{n}=\vc{n}(s)$ and $\vc{b}=\vc{b}(s)$ are the normal and
binormal vectors of $\g$, and $\a(s)\in C^\infty[0,s_0]$ is an
arbitrary function describing how our frame rotates with respect to
the Frenet one. It follows from (\ref{1.0a}) that
\begin{equation}\label{1.0c}
\vc{\b}(s):=-\sin\a(s) \vc{n}(s)+\cos\a(s)\vc{b}(s).
\end{equation}
Although this formula and (\ref{1.0a}) could be an appropriate
definition of $\vc{\eta}$ and $\vc{\b}$, we do not use this way. The
reason is that the Frenet frame does not exists for all smooth
curves, since the normal vector can be undefined at the points where
$\vc{r}''(s)=0$.

By $\om$ we indicate a bounded domain in $\mathds{R}^2$ with
infinitely smooth boundary, and the symbol $\e$ stands for a small
positive parameter. We introduce a thin curved rod as
\begin{equation*}
\Om_\e:=\{x\in \mathds{R}^3: x=\vc{r}(s)+\e \xi_2
\vc{\eta}(s)+\e\xi_3 \vc{\b}(s), s\in(0,s_0), (\xi_2,\xi_3)\in\om\}.
\end{equation*}
Since the curve $\g$ is smooth and not self-intersecting, the rod
$\Om_\e$ has no self-intersections for $\e$ small enough.
Hereinafter the parameter $\e$ is assumed to be chosen exactly in
such way.

The main object of our study is the spectrum of the Dirichlet
Laplacian in $L_2(\Om_\e)$, and this operator is denoted by
$\mathcal{H}_\e$. We introduce this operator rigourously as the
Friedrichs extension of $-\D_x$ on $C_0^\infty(\Om_\e)$. For each
$\e>0$ the operator $\mathcal{H}_\e$ has a compact resolvent and its
spectrum is thus purely discrete. The aim of this paper is to
construct the complete asymptotic expansions for the eigenvalues of
$\mathcal{H}_\e$. We also observe that the eigenvalues of
$\mathcal{H}_\e$ can be equivalently regarded as those of the
boundary value problem
\begin{equation*}
-\D\psi_\e=\l_\e\psi_\e\quad\text{in}\quad\Om_\e,\qquad
\psi_\e=0\quad\text{on}\quad\p\Om_\e,\qquad \psi_\e\in\H^1(\Om_\e).
\end{equation*}

In order to formulate the main results we need to introduce
additional notations. Let $\Wo^2(\om)$ be the subspace of
$\H^2(\om)$ consisting of the functions vanishing on $\p\om$. In the
same way we introduce the space $\Wo^2(0,s_0)$. By $\mathcal{S}$ we
indicate Dirichlet Laplacian in $L_2(\om)$ with $\Wo^2(\om)$ as the
domain. This operator is self-adjoint. Let $\l_n$ be the eigenvalues
of $\mathcal{S}$ arranged in the ascending order with the
multiplicities taken into account,
\begin{equation*}
\l_1<\l_2\leqslant \l_3\leqslant\ldots\leqslant \l_n\leqslant \ldots
\end{equation*}
By $\phi_n$ we denote the associated eigenfunctions orthonormalized
in $L_2(\om)$. By the smoothness improving theorems \cite[Ch. I\!V,
Sec. 2.3]{M} the functions $\phi_n$ are infinitely differentiable in
$\overline{\om}$.

Let an eigenvalue $\l_n$ be simple. Denote
\begin{equation}\label{1.0b}
\mathcal{R}:=\xi_3\frac{\p}{\p \xi_2}-\xi_2\frac{\p}{\p\xi_3},\quad
C_n(\om):=\int\limits_{\om} |\mathcal{R}\phi_n|^2\di\xi.
\end{equation}

It is straightforward to check that
\begin{equation}\label{2.1a}
\vc{\tau}'=\k_1\vc{\eta}-\k_2\vc{\b}, \quad
\vc{\eta}'=-\k_1\vc{\tau}+\k_3\vc{\b}, \quad
\vc{\b}'=\k_2\vc{\tau}-\k_3\vc{\eta},
\end{equation}
where $\k_i=\k_i(s)\in C^\infty[0,s_0]$ are certain functions
characterizing the geometric properties of $\g$ and rotation of
$\vc{\eta}$. By $\mathcal{L}_n$ we denote the operator
\begin{equation*}
-\frac{d^2}{ds^2}+C_n(\om)\k_3^2(s)-\frac{\k_1^2(s)+\k_2^2(s)}{4}
\end{equation*}
in $L_2(0,s_0)$ with the domain $\Wo^2(0,s_0)$. The operator
$\mathcal{L}_n$ is self-adjoint. Since the operator $\mathcal{L}_n$
is one-dimensional, by Cauchy theorem we conclude that all its
eigenvalues are simple. We indicate these eigenvalues by
$\l_0^{(n,m)}$, $m=1,2,\ldots$ and arrange them in the ascending
order with the multiplicities taken into account. Let
$\Psi_0^{(n,m)}$ be the associated eigenfunctions orthonormalized in
$L_2(0,s_0)$.

If one chooses $\vc{\eta}$ and $\vc{\b}$ in accordance with
(\ref{1.0a}), (\ref{1.0c}), it yields
\begin{equation*}
\k_1=\k\cos\a,\quad \k_2=\k\sin\a,\quad \k_3=\a'+\vk,
\end{equation*}
where $\k=\k(s)$ and $\vk=\vk(s)$ are the curvature and torsion of
$\g$, and $\k,\vk\in C^\infty[0,s_0]$. In this case the operator
$\mathcal{L}_n$ becomes
\begin{equation*}
-\frac{d^2}{ds^2}+C_n(\om)(\a'(s)+\vk(s))^2-\frac{\k^2(s)}{4}.
\end{equation*}

Our first result gives the complete asymptotic expansions for the
eigenvalues of $\mathcal{H}_\e$.

\begin{thrm}\label{th1.1}
Let $\l_n$ be a simple eigenvalue of $\mathcal{S}$.  Then there
exists a two-parametric set of the eigenvalues $\l^{(n,m)}_\e$ of
$\mathcal{H}_\e$ with the asymptotics
\begin{equation}
\l^{(n,m)}_\e=\e^{-2}\l_n+\l_0^{(n,m)}+\sum\limits_{i=1}^{\infty}
\e^i\l_i^{(n,m)},\label{1.1}
\end{equation}
where 
\begin{equation}\label{1.2}
\begin{aligned}
&\l_1^{(n,m)}=\big( \psi_0^{(n,m)},Q^{(n,m)}\psi_0^{(n,m)}
\big)_{L_2(\Om)}+2\big(\mathcal{R}\psi_0^{(n,m)},\k_3^2\mathrm{q}
\mathcal{R}\psi_0^{(n,m)}\big)_{L_2(\Om)},
\\
&Q^{(n,m)}:=
\left(2\l_0^{(n,m)}-\left(2C_n(\om)-\frac{1}{2}\right)\k_3^2\right)
\mathrm{q}+\frac{1}{2}\frac{\p^2 \mathrm{q}}{\p s^2} +\frac{1}{2}
\k'_3\mathcal{R}\mathrm{q},
\\
&\mathrm{q}= \mathrm{q}(s,\xi):=\k_1(s)\xi_2-\k_2(s)\xi_3,\quad
\psi_0^{(n,m)}= \psi_0^{(n,m)}(s,\xi):=\Psi_0^{(n,m)}(s)\phi_n(\xi).
\end{aligned}
\end{equation}
The remaining coefficients in the asymptotic series (\ref{1.1}) are
given by the formulas (\ref{3.42}) in Lemma~\ref{lm3.1}.
\end{thrm}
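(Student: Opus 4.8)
The plan is to construct the eigenvalues and eigenfunctions by a standard boundary-layer / matched-asymptotics argument carried out in suitably rescaled "straightened" coordinates, and then to justify the formal series rigorously. First I would pass to the curvilinear coordinates $(s,\xi_2,\xi_3)$ adapted to the moving frame $(\vc{\tau},\vc{\eta},\vc{\b})$, introduced in Section 3 of the paper, which map $\Om_\e$ onto the fixed cylinder $\Om=(0,s_0)\times\om$. After this change of variables and the rescaling $\xi=\e^{-1}\cdot(\text{transverse variables})$, the operator $\e^2\mathcal{H}_\e$ becomes a perturbation of the model operator $-\e^2\p_s^2+\mathcal{S}_\xi$ acting on $L_2(\Om)$, where $\mathcal{S}_\xi$ is Dirichlet Laplacian on $\om$; the perturbation is a differential operator whose coefficients depend on $\k_1,\k_2,\k_3$ and their derivatives and admit a power-series expansion in $\e$. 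The functions $\mathrm{q}$ and the rotation generator $\mathcal{R}$ arise precisely from the linear-in-$\xi$ and rotational parts of the Jacobian and metric.

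Next I would substitute the Ansätze
\begin{equation*}
\l^{(n,m)}_\e=\e^{-2}\l_n+\l_0^{(n,m)}+\sum_{i\ge1}\e^i\l_i^{(n,m)},\qquad
\psi^{(n,m)}_\e=\psi_0^{(n,m)}+\sum_{i\ge1}\e^i\psi_i^{(n,m)}
\end{equation*}
into the transformed eigenvalue equation and collect powers of $\e$. The $\e^0$-equation forces $\psi_0^{(n,m)}$ to lie in the eigenspace of $\mathcal{S}_\xi$ for $\l_n$, i.e.\ $\psi_0^{(n,m)}(s,\xi)=\Psi_0^{(n,m)}(s)\phi_n(\xi)$; the $\e^2$-equation, after projecting onto $\phi_n$ by the Fredholm alternative (solvability condition in $L_2(\om)$), yields exactly the one-dimensional operator $\mathcal{L}_n$ with eigenpairs $(\l_0^{(n,m)},\Psi_0^{(n,m)})$ — this is where the constant $C_n(\om)=\int_\om|\mathcal{R}\phi_n|^2\di\xi$ and the $-\tfrac14(\k_1^2+\k_2^2)$ term appear, the latter from completing the square in the first-order transport terms. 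At each subsequent order one solves a boundary-value problem on $\om$ for the $\xi$-dependence (uniquely, after imposing orthogonality to $\phi_n$), then imposes the solvability condition on the reference interval, which is a nonhomogeneous equation for $\Psi_i^{(n,m)}$ of the form $(\mathcal{L}_n-\l_0^{(n,m)})\Psi_i^{(n,m)}=F_i$; solvability there fixes $\l_i^{(n,m)}$ uniquely (using simplicity of $\l_0^{(n,m)}$, guaranteed by the one-dimensionality of $\mathcal{L}_n$), and $\Psi_i^{(n,m)}$ is then fixed by normalization. The explicit formula \eqref{1.2} for $\l_1^{(n,m)}$ comes from carrying this out at order $\e^3$: one computes the $\e^1$- and $\e^3$-order source terms (which involve $\p_s^2\mathrm{q}$, $\k_3'\mathcal{R}\mathrm{q}$ and the factor $2\l_0^{(n,m)}-(2C_n(\om)-\tfrac12)\k_3^2$), solves the intermediate transverse problems, and reads off the solvability constant. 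All of this is deferred to Lemma~\ref{lm3.1}, where the recurrence and the formula \eqref{3.42} for the general coefficient are recorded.

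Finally I would justify the formal expansion. Truncating the series at order $N$ gives an approximate eigenpair $(\l_{\e,N},\psi_{\e,N})$ with $\|(\mathcal{H}_\e-\l_{\e,N})\psi_{\e,N}\|_{L_2(\Om_\e)}\le C_N\e^{N+1/2}\|\psi_{\e,N}\|_{L_2(\Om_\e)}$ (the half-power loss comes from cutting off the boundary-layer corrections near $s=0,s_0$), so by the spectral theorem $\mathcal{H}_\e$ has an eigenvalue within distance $O(\e^{N-1/2})$ of $\l_{\e,N}$; since the gaps between the $\l_0^{(n,m)}$ are $O(1)$ and the transverse spectral gaps are $O(\e^{-2})$, these approximate eigenvalues are mutually well separated, so each corresponds to a genuine simple eigenvalue $\l^{(n,m)}_\e$, and a standard argument (projecting the true eigenfunction onto $\psi_{\e,N}$) upgrades the estimate to the full asymptotic series \eqref{1.1} together with the eigenfunction expansion. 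I expect the main obstacle to be the bookkeeping in the transformed operator — obtaining clean closed forms for the order-$\e$ and order-$\e^2$ coefficients so that the solvability conditions collapse to $\mathcal{L}_n$ and to the stated $Q^{(n,m)}$ — together with the careful treatment of the Dirichlet conditions at the rod's ends $s=0,s_0$, where the ansatz must be corrected by exponentially decaying boundary layers in order not to spoil the power-of-$\e$ accuracy.
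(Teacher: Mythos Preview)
Your outline matches the paper's proof closely: the same change of variables to the fixed cylinder $\Om$, the same recursive scheme (solve on $\om$ via Fredholm with respect to $\mathcal{S}-\l_n$, then impose solvability to get a problem for $\mathcal{L}_n-\l_0^{(n,m)}$, which fixes $\l_i^{(n,m)}$), and a quasi-mode justification. Two points where you diverge from the paper deserve comment.

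First, you introduce boundary-layer corrections at $s=0,s_0$ and anticipate a half-power loss. The paper does not do this: in the recursion the functions $\Psi_i^{(n,m)}$ are taken in $\Wo^2(0,s_0)$ (so $\Psi_i^{(n,m)}(0)=\Psi_i^{(n,m)}(s_0)=0$), and the truncated ansatz $\psi_{\e,N}^{(n,m)}$ is plugged directly into the transformed equation, yielding a residual of size $O(\e^{N-1})$ in $C^k(\overline{\Om})$ (Lemma~3.2) with no $\e^{1/2}$ factor. The justification then goes through the bounded compact operator $\mathcal{A}_\e=(1+\d\,\mathrm{p}_\e^{-1/2}\widehat{\mathcal{H}}_\e\mathrm{p}_\e^{-1/2})^{-1}$ and the Vishik--Lyusternik lemma (Lemma~3.3), not the spectral theorem applied to $\mathcal{H}_\e$ directly. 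This sidesteps the end-boundary issue for the eigenvalue statement; the fact that the outer expansion does not capture the eigenfunction uniformly up to the ends shows up only in Theorem~1.2, where the $C^k$ asymptotics hold only on $\overline{\Om^{(t)}}$, $t>0$.

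Second, your claim that the approximate eigenvalues are ``mutually well separated'' so that each corresponds to a \emph{simple} eigenvalue overshoots the theorem. For a fixed $n$ the gaps between $\l_0^{(n,m)}$ are $O(1)$, but nothing prevents $\e^{-2}\l_n+\l_0^{(n,m)}$ from coinciding with $\e^{-2}\l_{n'}+\l_0^{(n',m')}$ for $n'\neq n$ at some $\e$. The paper is explicit that Theorem~1.1 asserts only the \emph{existence} of eigenvalues with the stated asymptotics, not their simplicity, and the passage from ``for each $N$ there is a nearby eigenvalue'' to ``there is a single $\l_\e^{(n,m)}$ with the full series'' is handled by a monotone-in-$N$ choice of thresholds $\e_N^{(n,m)}$ rather than by a separation argument. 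Simplicity (and the eigenfunction expansion you mention) is proved only for $n=1$ in Theorem~1.2, using the two-term asymptotics from \cite{BMT} as input.
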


The series (\ref{1.1}) are the asymptotic ones and we know nothing
on their convergence.

We observe that the set of the eigenvalues described in
Theorem~\ref{th1.1} is two-parametric and is indexed by $n$ and $m$.
Given $n$, the eigenvalues $\l^{(n,m)}(\e)$ form a series with the
same leading term. We stress that Theorem~\ref{th1.1} does not imply
that the eigenvalues $\l_\e^{(n,m)}$ form the whole set of the
eigenvalues of $\mathcal{H}_\e$. The first reason is the assumption
on the simplicity of $\l_n$. And even without this assumption it is
an independent problem to find out whether the eigenvalues
$\l_\e^{(n,m)}$ are the only possible ones or not.

We make the assumption that $\l_n$ is simple in order to simplify
the calculations in the formal constructing of the asymptotic
expansion, see Sec. 3. If $\l_n$ is multiply, it is also possible to
construct the asymptotic expansions, but the formal constructing
becomes more complicated and requires some additional careful
calculations. Another interesting issue is the multiplicities of the
perturbed eigenvalues corresponding to a multiple eigenvalue $\l_n$.
In view of these issues we regard the case of multiple eigenvalue
$\l_n$ as an independent problem, which we postpone for another
article.

One more interesting question is on the asymptotic expansions for
the eigenfunctions associated with $\l_\e^{(n,m)}$. As usually, to
justify such asymptotic expansions, one has to know lower bounds for
the distances between the perturbed eigenvalues. The structure of
the eigenvalues $\l_\e^{(n,m)}$ is such that it is rather difficult
to obtain such bounds once the eigenvalues are bigger than
$\e^{-2}\l_2$. If we consider only the first eigenvalues of
$\mathcal{H}_\e$ lying between $\e^{-2}\l_1$ and $\e^{-2}\l_2$, it
is possible to prove the mentioned lower bounds and to obtain the
asymptotic expansions for the associated eigenfunctions. This is our
second main result. Before formulating it, we introduce two
additional notations,
\begin{equation*}
\Om:=\{(s,\xi): 0<s<s_0, \xi\in\om\},\quad \Om^{(t)}:=\{(s,\xi):
t<s<s_0-t, \xi\in\om\},\quad t\in(0,s_0/2).
\end{equation*}


\begin{thrm}\label{th1.2}
Given any $M\geqslant 1$, there exists $\e_0=\e_0(M)>0$ such that
for all $\e<\e_0$ the first $M$ eigenvalues of $\mathcal{H}_\e$ are
$\l_\e^{(1,m)}$, $m=1,\ldots,M$,  satisfying (\ref{1.1}). These
eigenvalues are simple and the associated eigenfunctions have the
asymptotics
\begin{equation}\label{1.3}
\psi_\e^{(1,m)}(x(s,\e\xi))=\Psi_0^{(1,m)}(s)\phi_1(\xi)
+\sum\limits_{i=1}^{\infty} \e^i\psi_i^{(1,m)}(s,\xi),
\end{equation}
where the coefficients of the series are given in Lemma~\ref{lm3.1}.
The asymptotics hold true in $\H^1(\Om)$-norm and
$C^k(\overline{\Om^{(t)}})$-norms for all $k\geqslant 0$,
$t\in(0,s_0/2)$.
\end{thrm}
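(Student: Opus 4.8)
The idea is to combine the quasimodes underlying Theorem~\ref{th1.1} with a sharp two-sided variational estimate for $\spec(\mathcal{H}_\e)$: the upper bound for the eigenvalue counting function comes from the quasimodes as trial functions, the lower bound from a two-scale estimate for the quadratic form of $\mathcal{H}_\e$. Throughout, the asymptotic constructions of Section~3 and Lemma~\ref{lm3.1} provide, for each $m$ and each $K\in\mathds{N}$, the truncations $\l_{\e,K}^{(1,m)}:=\e^{-2}\l_1+\l_0^{(1,m)}+\sum_{i=1}^{K}\e^i\l_i^{(1,m)}$ and $u_{\e,K}^{(1,m)}:=\sum_{i=0}^{K}\e^i\psi_i^{(1,m)}$; read as a function on $\Om_\e$ via $x=x(s,\e\xi)$, the latter satisfies $\|(\mathcal{H}_\e-\l_{\e,K}^{(1,m)})u_{\e,K}^{(1,m)}\|_{L_2(\Om_\e)}\le C_K\e^{K-1}\|u_{\e,K}^{(1,m)}\|_{L_2(\Om_\e)}$ and $u_{\e,K}^{(1,m)}\to\Psi_0^{(1,m)}\phi_1$ in $L_2(\Om)$. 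Since the functions $\Psi_0^{(1,m)}\phi_1$ are $L_2(\Om)$-orthonormal, for small $\e$ the trial functions $u_{\e,K}^{(1,1)},\dots,u_{\e,K}^{(1,M)}$ are linearly independent with an almost-diagonal Gram matrix, and the min-max principle applied to them gives, for every $m\le M$,
\begin{equation*}
\Lambda_m(\mathcal{H}_\e)\le\e^{-2}\l_1+\l_0^{(1,m)}+C\e ,
\end{equation*}
where $\Lambda_m(\mathcal{H}_\e)$ denotes the $m$-th eigenvalue of $\mathcal{H}_\e$ counted with multiplicity.

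For the matching lower bound I would use that, in the coordinates $(s,\xi)$ of Section~3, $\mathcal{H}_\e$ becomes $-\e^{-2}\D_\xi-\p_s^2$ plus lower-order terms whose coefficients, together with the Jacobian of $x=\vc{r}(s)+\e\xi_2\vc{\eta}(s)+\e\xi_3\vc{\b}(s)$, are smooth on $\overline\Om$ and admit uniform expansions in powers of $\e$. Given $u\in\Ho^1(\Om_\e)$, decompose $u(s,\cdot)=c(s)\phi_1+u^\perp(s,\cdot)$ with $u^\perp(s,\cdot)\perp\phi_1$ in $L_2(\om)$ for a.e. $s$. Then the cross-sectional Dirichlet energy is at least $\e^{-2}(\l_1\|c\phi_1\|^2+\l_2\|u^\perp\|^2)$; the $s$-derivative together with the curvature and torsion terms (those built from $\mathrm{q}$, $\mathcal{R}$ and $\k_1,\k_2,\k_3$) reproduces the quadratic form of $\mathcal{L}_1$ acting on $c$ modulo $O(\e)(\mathcal{H}_\e u,u)+O(\e)\|u\|^2$; and the mixed contributions of $c\phi_1$ and $u^\perp$ are, after Cauchy--Schwarz and Young, absorbed into a small multiple of $\e^{-2}(\l_2-\l_1)\|u^\perp\|^2$ together with errors of the same type. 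Using $(\mathcal{H}_\e u,u)\ge\e^{-2}\l_1\|u\|^2$ to swallow the $O(\e)(\mathcal{H}_\e u,u)$ terms, one obtains
\begin{equation*}
(\mathcal{H}_\e u,u)\ge\e^{-2}\l_1\|u\|^2+(\mathcal{L}_1c,c)_{L_2(0,s_0)}+\big(\e^{-2}(\l_2-\l_1)-o(1)\big)\|u^\perp\|^2-C\|u\|^2 ,
\end{equation*}
whence, by the min-max principle, $\Lambda_m(\mathcal{H}_\e)\ge\e^{-2}\l_1+\l_0^{(1,m)}-o(1)$ for every fixed $m$, because for $\e$ small the $m$-th eigenvalue of the right-hand side lies in the $\phi_1$-channel (the $u^\perp$-channel only produces eigenvalues of order $\e^{-2}\l_2$). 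I expect this two-scale form estimate --- the uniform control of the $\e$-expansion of the metric and, above all, the domination of the cross-terms by the cross-sectional spectral gap $\e^{-2}(\l_2-\l_1)$ --- to be the principal technical obstacle.

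Combining the two bounds, $\Lambda_m(\mathcal{H}_\e)=\e^{-2}\l_1+\l_0^{(1,m)}+o(1)$ for $m=1,\dots,M+1$. Since $\mathcal{L}_1$ is one-dimensional, its spectrum is simple, $\l_0^{(1,1)}<\l_0^{(1,2)}<\cdots$, so for $\e$ small these numbers are separated by an $\e$-independent gap $\d_1>0$; consequently, for $m\le M$, exactly one eigenvalue of $\mathcal{H}_\e$ --- namely $\Lambda_m(\mathcal{H}_\e)$ --- lies in the interval $[\e^{-2}\l_1+\l_0^{(1,m)}-\d_1/3,\ \e^{-2}\l_1+\l_0^{(1,m)}+\d_1/3]$, so $\Lambda_m(\mathcal{H}_\e)$ is simple. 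The eigenvalue $\l_\e^{(1,m)}$ supplied by Theorem~\ref{th1.1} also lies in that interval for $\e<\e_0(M)$, hence $\l_\e^{(1,m)}=\Lambda_m(\mathcal{H}_\e)$; this proves that the first $M$ eigenvalues of $\mathcal{H}_\e$ are $\l_\e^{(1,1)}<\cdots<\l_\e^{(1,M)}$, that they are simple, and that $\dist\big(\l_\e^{(1,m)},\spec(\mathcal{H}_\e)\setminus\{\l_\e^{(1,m)}\}\big)\ge\d_1/3$.

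It remains to justify (\ref{1.3}). With the $\e$-uniform spectral gap just obtained, the spectral theorem applied to $(\mathcal{H}_\e-\l_{\e,K}^{(1,m)})u_{\e,K}^{(1,m)}$ shows that, after normalization and an appropriate choice of phase, $\|\psi_\e^{(1,m)}(x(s,\e\xi))-u_{\e,K}^{(1,m)}(s,\xi)\|_{L_2(\Om)}\le C_K\e^{K-1}$; combined with the coercivity of the form of $\mathcal{H}_\e$ this upgrades to an $\H^1(\Om)$ bound, so (\ref{1.3}) holds in $\H^1(\Om)$ to any prescribed order. Finally, on $\Om^{(t)}$ one stays uniformly away from the ends $s=0,s_0$ while all data are smooth up to $\p\om$, and in the $(s,\xi)$-variables the operator is a smooth $\e$-perturbation of $-\e^{-2}\D_\xi-\p_s^2$; hence interior-in-$s$ elliptic estimates applied to the difference $\psi_\e^{(1,m)}(x(s,\e\xi))-u_{\e,K}^{(1,m)}(s,\xi)$, bootstrapped in the usual way, turn the $L_2$ bound into the $C^k(\overline{\Om^{(t)}})$ bound for every $k$. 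This completes the proof of Theorem~\ref{th1.2}.
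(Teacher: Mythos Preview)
Your argument is sound and follows the same overall architecture as the paper, but with one substantive difference in how the two-term asymptotics $\Lambda_m(\mathcal{H}_\e)=\e^{-2}\l_1+\l_0^{(1,m)}+o(1)$ (and hence the simplicity and identification $\l_\e^{(1,m)}=\Lambda_m(\mathcal{H}_\e)$) are obtained. The paper does not prove this step from scratch: it quotes Theorem~4.4(ii) of \cite{BMT}, which already supplies exactly this two-term expansion for the first $M$ eigenvalues counted with multiplicity; the strict separation of the $\l_0^{(1,m)}$ then immediately gives simplicity and pins down each $\l_\e^{(1,m)}$ as the unique eigenvalue in a fixed-width window. You instead propose to derive the lower bound directly via a two-scale form estimate---projecting onto $\phi_1$ and using the cross-sectional gap $\e^{-2}(\l_2-\l_1)$ to absorb $u^\perp$ and the mixed terms. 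That is essentially the content of the cited result in \cite{BMT} (there obtained by $\G$-convergence); your quadratic-form route is viable and more self-contained, but, as you yourself flag, it is where all the genuine work lies, and the paper sidesteps it entirely by citation. For the eigenfunction asymptotics your outline coincides with the paper's: the paper invokes the Vishik--Lyusternik lemma (Lemma~\ref{lm3.3}) with $d=\e^{N/2}$ to get the $L_2(\Om)$ estimate, uses the equation satisfied by the remainder to reach $\H^1(\Om)$, and then runs an interior-in-$s$ elliptic bootstrap (Lemma~\ref{lm4.1}, each step costing a factor $\e^{-2}$ because of the $\e^{-2}\D_\xi$ in $\widehat{\mathcal{H}}_\e$) to obtain the $C^k(\overline{\Om^{(t)}})$ bounds---exactly what your final paragraph sketches.
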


The results of \cite{BMT} consist of the two-term asymptotics for
$\l_\e^{(1,m)}$ and the leading term in the asymptotics for the
eigenfunctions $\psi_\e^{(1,m)}$. The asymptotics for the
eigenfunctions were obtained in $L_2(\Om)$. Theorem~\ref{th1.2}
extends these results in two directions. First, it gives the
complete asymptotic expansions. Second, the asymptotics of the
eigenfunctions are given in a stronger norm. One more extension is
provided by Theorem~\ref{th1.1}. Namely, in addition to the first
series of the eigenvalues $\l_\e^{(1,m)}$ described in \cite{BMT},
we describe a countable set of similar series $\l_\e^{(n,m)}$,
$n\geqslant 2$.

One more difference from \cite{BMT} is the technique employed. The
study in \cite{BMT} was based on $\G$-convergence of certain
functionals. Our approach consists of two main steps. The first step
is the formal constructing of the asymptotic expansions for the
eigenvalues and the eigenfunctions by the multiscale method
\cite{BP}. The second step is an estimating of error terms by a
result from spectral perturbation theory, see \cite[Lms 12,13]{VL}.

\section{Change of variables}

In this section we transform the operator $\mathcal{H}_\e$ to
another one which will be more convenient in proving
Theorems~\ref{th1.1},~\ref{th1.2}.

Let $y=(y_2,y_3)$ be Cartesian coordinates in the plane spanned over
$\vc{\eta}$ and $\vc{\b}$ with the axes along these vectors and so
that the variable $y_2$ corresponds to $\vc{\eta}$ and the variable
$y_3$ does to $\vc{\b}$. We first pass to the variables $(s,y)$ and
the domain $\Om_\e$ is mapped onto
\begin{equation*}
\widetilde{\Om}_\e:=\{(s,y): 0<s<s_0, \e^{-1}y\in\om\}.
\end{equation*}
At the second step we rescale the variables $y$ passing to variables
$(s,\xi)$, where $\xi=(\xi_2,\xi_3)=\e^{-1} y$. Then the domain
$\widetilde{\Om}_\e$ is mapped onto $\Om$.

We introduce the operator describing the passing to the variables
$(s,y)$ as
\begin{equation*}
(\mathcal{U}u)(s,y)= u(x(s,y)),\quad (\mathcal{U}^{-1}u)(x)=
u(s(x),y(x)).
\end{equation*}
We let
\begin{equation*}
\widetilde{\mathcal{H}}_\e:=\mathrm{p}\,\mathcal{U} \mathcal{H}_\e
\mathcal{U}^{-1},\quad
\mathrm{p}=\mathrm{p}(s,y):=1-\mathrm{q}(s,y),\quad
\mathrm{q}(s,y)=\k_1(s)y_2-\k_2(s)y_3.
\end{equation*}
If $\l_\e$ and $\psi_\e$ are an eigenvalue and an associated
eigenfunction of $\mathcal{H}_\e$, it is clear that the function
$\mathcal{U}\psi_\e$ solves the equations
\begin{equation}\label{2.0b}
\mathcal{U}\mathcal{H}_\e \mathcal{U}^{-1}\,
\mathcal{U}\psi_\e=\l_\e\, \mathcal{U}\psi_\e,\quad
\widetilde{\mathcal{H}}_\e \, \mathcal{U}\psi_\e=\l_\e\mathrm{p}\,
\mathcal{U}\psi_\e.
\end{equation}

Let us obtain the differential expression for
$\widetilde{\mathcal{H}}_\e$. Taking into account (\ref{2.1a}), and
differentiating the identity
\begin{equation*}
x=\vc{r}(s)+y_2\vc{\eta}(s)+y_3\vc{\b}(s)
\end{equation*}
with respect to $s$, $y_2$, and $y_3$, we obtain
\begin{equation*}
\frac{\p x}{\p s}=\mathrm{p}\vc{\tau}(s)- \k_3 y_3\vc{\eta}(s)+
\k_3y_2\vc{\b}(s),\quad \frac{\p x}{\p y_2}=\vc{\eta}(s),\quad
\frac{\p x}{\p y_3}=\vc{\b}(s).
\end{equation*}
Thus, the derivatives with respect to $x$ and $(s,y)$ are related by
the identity
\begin{equation*}
\nabla_{(s,y)}=\mathrm{P}\nabla_x,
\end{equation*}
where $\mathrm{P}$ is the matrix with the rows
\begin{equation*}
\mathrm{P}:=
\begin{pmatrix}
\mathrm{p}\vc{\tau}- \k_3y_3\vc{\eta}+ \k_3y_2\vc{\b}
\\
\vc{\eta}
\\
\vc{\b}
\end{pmatrix}.
\end{equation*}
It is easy to check that
\begin{equation}
\det\mathrm{P}=\mathrm{p},\quad
\nabla_{x}=\mathrm{P}^{-1}\nabla_{(s,y)}, \quad \mathrm{P}^{-1}=
\begin{pmatrix}
\mathrm{p}^{-1}\vc{\tau}\ \ \k_3 y_3
\mathrm{p}^{-1}\vc{\tau}+\vc{\eta}\ \ -\k_3 y_2
\mathrm{p}^{-1}\vc{\tau}+\vc{\b}
\end{pmatrix},
\label{2.1}
\end{equation}
where the vectors in the definition of $\mathrm{P}^{-1}$ are treated
as columns. Since $\e^{-1}y\in\om$, we have $y=\Odr(\e)$ and hence
$\mathrm{q}(s,y)=\Odr(\e)$. It yields that the function
$\mathrm{p}(s,y)$ is strictly positive for the considered values of
$y$.

By (\ref{2.1}) for each $u_1, u_2\in C_0^\infty(\widetilde{\Om}_\e)$
we have
\begin{align*}
(\widetilde{\mathcal{H}}_\e u_1, u_2)_{L_2(\widetilde{\Om}_\e)}&=
(\mathrm{p}\,\mathcal{U}\mathcal{H}_\e \mathcal{U}^{-1}
u_1,u_2)_{L_2(\widetilde{\Om}_\e)} = (\mathcal{H}_\e
\mathcal{U}^{-1} u_1, \mathcal{U}^{-1} u_2)_{L_2(\Om_\e)}
\\
& =(\nabla_x
\mathcal{U}^{-1}u_1,\nabla_x\mathcal{U}^{-1}u_2)_{L_2(\Om_\e)} =
(\mathrm{P}^{-1}\nabla_{(s,y)}u_1, \mathrm{p}\,
\mathrm{P}^{-1}\nabla_{(s,y)}u_2)_{L_2(\widetilde{\Om}_\e)}
\\
&
=-(\mathrm{div}_{(s,y)} \mathrm{p}\,(\mathrm{P}^{-1})^t
\mathrm{P}^{-1}\nabla_{(s,y)}u_1, u_2)_{L_2(\widetilde{\Om}_\e)},
\end{align*}
and therefore
\begin{gather}
\widetilde{\mathcal{H}}_\e:=-\mathrm{div}_{(s,y)}\,
\mathrm{A}\nabla_{(s,y)}\label{2.2a}
\\
\mathrm{A}=(A_{ij})_{i,j=\overline{1,3}}=
\begin{pmatrix}
\mathrm{p}^{-1} & \k_3 y_3\mathrm{p}^{-1} & -\k_3 y_2
\mathrm{p}^{-1}
\\
\k_3 y_3\mathrm{p}^{-1} & \mathrm{p}+\k_3^2 y_3^2\mathrm{p}^{-1} &
-\k_3^2 y_2y_3 \mathrm{p}^{-1}
\\
-\k_3 y_2 \mathrm{p}^{-1} & -\k_3^2 y_2y_3\mathrm{p}^{-1} &
\mathrm{p}+\k_3^2 y_2^2 \mathrm{p}^{-1}
\end{pmatrix}. \nonumber
\end{gather}
Now we pass to the variables $\xi$. It leads us to a finally
transformed operator
\begin{gather}
\widehat{\mathcal{H}}_\e=- \bigg(\frac{\p}{\p s} A_{11}^{(\e)}
\frac{\p}{\p
s}+\e^{-1}\sum\limits_{i=2}^{3}\frac{\p}{\p\xi_i}A_{i1}^{(\e)}
\frac{\p}{\p s} +\e^{-1}\sum\limits_{i=2}^{3}\frac{\p}{\p s}A_{1i
}^{(\e)} \frac{\p}{\p\xi_i}+\e^{-2}\sum\limits_{i,j=2}^{3}
\frac{\p}{\p\xi_i}A_{ij}^{(\e)} \frac{\p}{\p\xi_j}\bigg),
\label{2.2}
\\
\mathrm{A}^{(\e)}:=\big(A_{ij}^{(\e)}\big)_{i,j=\overline{1,3}}=
\begin{pmatrix}
\mathrm{p}_\e^{-1} & \e\k_3\xi_3\mathrm{p}_\e^{-1} & -\e\k_3\xi_2
\mathrm{p}_\e^{-1}
\\
\e\k_3\xi_3\mathrm{p}_\e^{-1} &
\mathrm{p}_\e+\e^2\k_3^2\xi_3^2\mathrm{p}_\e^{-1} &
-\e^2\k_3^2\xi_2\xi_3 \mathrm{p}_\e^{-1}
\\
-\e\k_3\xi_2 \mathrm{p}_\e^{-1} &
-\e^2\k_3^2\xi_2\xi_3\mathrm{p}_\e^{-1} &
\mathrm{p}_\e+\e^2\k_3^2\xi_2^2 \mathrm{p}_\e^{-1}
\end{pmatrix},\label{2.3}
\end{gather}
where $\mathrm{p}_\e(s,\xi):=1-\e \mathrm{q}(s,\xi)$, and the
operator $\widehat{\mathcal{H}}_\e$ is considered in $L_2(\Om)$ as
the Friedrichs extension from $C_0^\infty(\Om)$. The eigenvalue
equation (\ref{2.0b}) casts into the form
\begin{equation}\label{2.4}
\widehat{\mathcal{H}}_\e\psi_\e=\l_\e \mathrm{p}_\e\psi_\e,
\end{equation}
where we redenoted $(\mathcal{U}\psi_\e)(s,\e\xi)$ by $\psi_\e$.

\section{Proof of Theorem~\ref{th1.1}}

In this section we prove Theorem~\ref{th1.1}. The proof is divided
into two parts, the first being devoted to the formal constructing
of the asymptotic expansions. The second part consists in proving
the existence of the mentioned two-parametric set of the eigenvalues
and in the justification of the formal asymptotic expansions for
these eigenvalues, i.e., establishing estimates for the error terms.

We construct the asymptotic expansions for the eigenvalues and the
associated eigenfunctions as the series
\begin{align}
&\l^{(n,m)}_\e=\sum\limits_{i=-2}^{\infty}
\e^i\l_i^{(n,m)},\label{3.1}
\\
&\psi^{(n,m)}_\e(x)=\sum\limits_{i=0}^{\infty}
\e^i\psi_i^{(n,m)}(s,\xi).\label{3.2}
\end{align}
The aim of the formal constructing is to determine the coefficients
of these series.

We expand the functions $A_{ij}^{(\e)}$ in the powers of $\e$,
\begin{align}
&A_{ij}^{(\e)}=\sum\limits_{k=0}^{\infty}\e^k A_k^{(ij)},\quad
A_k^{(ij)}=A_k^{(ji)}, \label{3.3}
\\
&
\begin{aligned}
&A_k^{(11)}=\mathrm{q}^k, && k\geqslant 0,
\\
&A_0^{(12)}=0,\quad A_k^{(12)}=\k_3\xi_3 \mathrm{q}^{k-1}, &&
k\geqslant 1,
\\
&A_0^{(13)}=0,\quad A_k^{(13)}=-\k_3\xi_2 \mathrm{q}^{k-1}, &&
k\geqslant 1,
\\
&A_0^{(22)}=1,\quad A_1^{(22)}=-\mathrm{q},\quad
A_k^{(22)}=\k_3^2\xi_3^2\mathrm{q}^{k-2}, && k\geqslant 2,
\\
&A_0^{(23)}=A_1^{(23)}=0,\quad
A_k^{(23)}=-\k_3^2\xi_2\xi_3\mathrm{q}^{k-2}, && k\geqslant 2,
\\
&A_0^{(33)}=1,\quad A_1^{(33)}=-\mathrm{q},\quad
A_k^{(33)}=\k_3^2\xi_2^2\mathrm{q}^{k-2}, && k\geqslant 2.
\end{aligned}\label{3.4}
\end{align}
Hereinafter $\mathrm{q}=\mathrm{q}(s,\xi)$, if else is not
specified.

We substitute (\ref{2.2}), (\ref{3.1}), (\ref{3.2}), (\ref{3.3}),
(\ref{3.4}) into (\ref{2.4}) and equate the coefficients at the same
powers of $\e$. Calculating the coefficient at $\e^{i-2}$,
$i\geqslant 0$, we obtain
%
\begin{align}
&
\begin{aligned}
(-\D_\xi-\l_{-2}^{(n,m)})\psi_i^{(n,m)}&=
\sum\limits_{j=1}^i\l_{j-2}^{(n,m)}\psi_{i-j}^{(n,m)}
+F_i^{(n,m)}\quad \text{in}\quad \Om,
\\
\psi_i^{(n,m)}&=0\quad \text{on}\quad \p\Om,
\end{aligned}
 \label{3.5}
\\
&F_i^{(n,m)}:=\sum\limits_{j=1}^i
(\mathcal{F}_j-\l_{j-3}^{(n,m)}\mathrm{q})\psi_{i-j}^{(n,m)},
\label{3.5a}
\\
&
\begin{aligned}
\mathcal{F}_1:=&-\frac{\p}{\p\xi_2}\mathrm{q}\frac{\p}{\p\xi_2}
-\frac{\p}{\p\xi_3}\mathrm{q}\frac{\p}{\p\xi_3},
\\
\mathcal{F}_j:=&\frac{\p}{\p s}A_{j-2}^{(11)}\frac{\p}{\p
s}+\sum\limits_{l=2}^{3}\frac{\p}{\p\xi_l}A_{j-1}^{(l1)}\frac{\p}{\p
s} +\sum\limits_{l=2}^{3}\frac{\p}{\p
s}A_{j-1}^{(1l)}\frac{\p}{\p\xi_l}
+\sum\limits_{t,l=2}^{3}\frac{\p}{\p\xi_t}A_{j}^{(tl)}\frac{\p}{\p\xi_l}
\\
=&\frac{\p}{\p s}\mathrm{q}^{j-2}\frac{\p}{\p s}+
\mathcal{R}\k_3\mathrm{q}^{j-2}\frac{\p}{\p s}+ \frac{\p}{\p
s}\k_3\mathrm{q}^{j-2}\mathcal{R}+
\k_3^2\mathcal{R}\mathrm{q}^{j-2}\mathcal{R}, \quad j\geqslant 2.
\end{aligned}\label{3.7}
\end{align}

Consider the problem (\ref{3.5}) for $i=0$. It is clear that its
solution can be chosen as
\begin{equation}\label{3.8}
\psi_0^{(n,m)}(s,\xi)=\Psi_0^{(n,m)}(s)\phi_n(\xi),\quad
\l_{-2}^{(n,m)}=\l_n,
\end{equation}
where the function $\Psi_0^{(n,m)}$ is unknown and should satisfy
the boundary conditions
\begin{equation}\label{3.9}
\Psi_0^{(n,m)}(0)=\Psi_0^{(n,m)}(s_0)=0.
\end{equation}

Taking into account (\ref{3.8}) and an obvious formula
\begin{equation}\label{3.8a}
(\mathcal{F}_1-\l_n\mathrm{q})\phi_n=-\left(
\k_1\frac{\p}{\p\xi_2}-\k_2\frac{\p}{\p\xi_3}\right)\phi_n,
\end{equation}
we write the problem (\ref{3.5}) for $i=1$,
\begin{equation}\label{3.12}
\begin{aligned}
(-\D_\xi-\l_n)\psi_1^{(n,m)}=&-\Psi_0^{(n,m)}\left(
\k_1\frac{\p}{\p\xi_2}-\k_2\frac{\p}{\p\xi_3}\right)\phi_n
 +\l_{-1}^{(n,m)} \Psi_0^{(n,m)}\phi_n\quad \text{in}\quad \Om,
\\
\psi_1^{(n,m)}=&0\quad \text{on}\quad \p\Om.
\end{aligned}
\end{equation}
Employing the eigenvalue equation for $\phi_n$, by direct
calculations we check that
\begin{equation}\label{3.9b}
(-\D_\xi-\l_n)\mathrm{q}\phi_n=-2 \left(
\k_1\frac{\p}{\p\xi_2}-\k_2\frac{\p}{\p\xi_3}\right)\phi_n.
\end{equation}
Hence, the problem (\ref{3.12}) is solvable for
\begin{equation}\label{3.9a}
\l_{-1}^{(n,m)}=0
\end{equation}
with a solution given by the identity
\begin{equation}\label{3.13}
\psi_1^{(n,m)}(s,\xi)=\frac{1}{2}\Psi_0^{(n,m)}(s)\phi_n(\xi)\mathrm{q}(s,\xi)+
\Psi_1^{(n,m)}(s)\phi_n(\xi),
\end{equation}
where the function $\Psi_1^{(n,m)}$ is unknown and should satisfy
the boundary conditions
\begin{equation*}
\Psi_1^{(n,m)}(0)=\Psi_1^{(n,m)}(s_0)=0.
\end{equation*}
The above obtained formula for $\l_{-1}^{(n,m)}$ and that for
$\l_0^{(n,m)}$ in (\ref{3.8}) prove the formulas for the first terms
in (\ref{1.1}).

We substitute the formulas (\ref{3.5a}), (\ref{3.7}), (\ref{3.8}),
(\ref{3.8a}), (\ref{3.9a})  into the problem (\ref{3.5}) for $i=2$
to obtain
\begin{align}
&
\begin{aligned}
(-\D_\xi-\l_n)\psi_2^{(n,m)}&=
\l_0^{(n,m)}\psi_0^{(n,m)}+F_2^{(n,m)} \quad \text{in}\quad \Om,
\\
 \psi_2^{(n,m)}&= 0\quad \text{on}\quad \p\Om,
\end{aligned}
\label{3.15}
\\
&F_2^{(n,m)}=\frac{1}{2}(\mathcal{F}_1-\l_n\mathrm{q})\mathrm{q}\psi_0^{(n,m)}
+\mathcal{F}_2\psi_0^{(n,m)}- \Psi_1^{(n,m)} \left(
\k_1\frac{\p}{\p\xi_2}-\k_2\frac{\p}{\p\xi_3}\right)\phi_n.\label{3.12b}
\end{align}
Taking into account the eigenvalue equation for $\phi_n$, by direct
calculations we check
\begin{equation}\label{3.16a}
(\mathcal{F}_1-\l_n\mathrm{q})\mathrm{q}\phi_n=-3\mathrm{q} \left(
\k_1\frac{\p}{\p\xi_2}-\k_2\frac{\p}{\p\xi_3}\right)\phi_n -
(\k_1^2+\k_2^2)\phi_n.
\end{equation}
Hence, we can rewrite the formula for $F_2^{(n,m)}$ as follows,
\begin{align}
&F_2^{(n,m)}=\widetilde{\mathcal{F}}\psi_0^{(n,m)}-\Psi_1^{(n,m)}
\left(\k_1\frac{\p}{\p\xi_2}-\k_2\frac{\p}{\p\xi_3}
\right)\phi_n,\label{3.10}
\\
&\widetilde{\mathcal{F}}\psi_0^{(n,m)}=\frac{1}{2}(\mathcal{F}_1-
\l_n\mathrm{q})\mathrm{q}\psi_0^{(n,m)}
+\mathcal{F}_2\psi_0^{(n,m)},\label{3.18c}
\\
&\widetilde{\mathcal{F}}:=-\frac{3\mathrm{q}}{2}
\left(\k_1\frac{\p}{\p\xi_2}-\k_2\frac{\p}{\p\xi_3}\right)-
\frac{\k_1^2+\k_2^2}{2}+\frac{\p^2}{\p s^2}+\left(\k_3\frac{\p}{\p
s}+\frac{\p}{\p s}\k_3\right)\mathcal{R}
+\k_3^2\mathcal{R}^2.\label{3.12a}
\end{align}
In (\ref{3.15}) the Laplace operator is taken only with respect to
$\xi$, and this problem involves $s$ as a parameter. So, we can
consider (\ref{3.15}) as a problem for the Dirichlet Laplacian
$\mathcal{S}$ in $\om$ with a dependence on $s$. Since $\l_n$ is a
simple eigenvalue of $\mathcal{S}$, the solvability condition of
(\ref{3.15}) is the orthogonality of the right hand side in the
equation to $\phi_n$ in $L_2(\om)$,

\begin{equation}\label{3.16}
\l_0^{(n,m)}\Psi_0^{(n,m)}+\big(F_2^{(n,m)},\phi_n\big)_{L_2(\om)}=0,\quad
s\in(0,s_0),
\end{equation}
where we have taken into account the normalization of $\phi_n$ and
the formula (\ref{3.8}). Let us evaluate the second term in the left
hand side of (\ref{3.16}).

Integrating by parts, we obtain
\begin{align}
&
\begin{aligned}
\int\limits_{\om} \phi_n \mathrm{q}
&\left(\k_1\frac{\p}{\p\xi_2}-\k_2\frac{\p}{\p\xi_3}\right)
\phi_n\di\xi= \frac{1}{2}\int\limits_{\om} \mathrm{q}
\left(\k_1\frac{\p}{\p\xi_2}-\k_2\frac{\p}{\p\xi_3}\right)
\phi_n^2\di\xi
\\
&=-\frac{1}{2}\int\limits_{\om} \phi_n^2
\left(\k_1\frac{\p}{\p\xi_2}-\k_2\frac{\p}{\p\xi_3}\right)
\mathrm{q}\di\xi= -\frac{\k_1^2+\k_2^2}{2},
\end{aligned}
\nonumber
\\
&
\begin{aligned}
\int\limits_{\om} \phi_n \mathcal{R} \phi_n\di\xi=
\frac{1}{2}\int\limits_{\om} \mathcal{R}\phi_n^2\di\xi=0,
\end{aligned}
\nonumber
\\
&
\begin{aligned}
\int\limits_{\om} \phi_n\mathcal{R}^2 \phi_n\di\xi=
-\int\limits_{\om} |\mathcal{R} \phi_n|^2\di\xi =-C_n(\om),
\end{aligned}\nonumber
\\
&
\begin{aligned}
\int\limits_{\om} \phi_n
\left(\k_1\frac{\p}{\p\xi_2}-\k_2\frac{\p}{\p\xi_3}\right)\phi_n\di\xi=
\frac{1}{2}\int\limits_{\om}
\left(\k_1\frac{\p}{\p\xi_2}-\k_2\frac{\p}{\p\xi_3}\right)\phi_n^2\di\xi
=0.
\end{aligned}
\label{3.13c}
\end{align}
We substitute the identities obtained, (\ref{3.8}), (\ref{3.10}),
(\ref{3.12a}) into (\ref{3.16}) and arrive at the equation
\begin{equation*}
\frac{\p^2\Psi_0^{(n,m)}}{\p s^2}+
\left(\frac{\k_1^2+\k_2^2}{4}-\k_3^2C_n(\om)\right)\Psi_0^{(n,m)}+
\l_0^{(n,m)}\Psi_0^{(n,m)}=0,\quad s\in(0,s_0).
\end{equation*}
Together with the boundary condition (\ref{3.9}) it can be rewritten
as
\begin{equation}\label{3.22}
\mathcal{L}_n\Psi_0^{(n,m)}=\l_0^{(n,m)}\Psi_0^{(n,m)}.
\end{equation}
Thus, $\l_0^{(n,m)}$ is a (simple) eigenvalue of $\mathcal{L}_n$ and
$\Psi_0^{(n,m)}$ is the associated eigenfunction. In what follows we
assume the eigenfunctions $\Psi_0^{(n,m)}$ are assumed to be
orthonormalized in $L_2(0,s_0)$. We also note that by the smoothness
improving theorems $\Psi_0^{(n,m)}\in C^\infty[0,s_0]$.

Let $V_n$ be the orthogonal complement to $\{\phi_n\}$ in
$L_2(\om)$. By $\mathcal{S}_n^\bot$ we denote the restriction of
$\mathcal{S}$ on $V_n\cap\Wo^2(\om)$. It is clear that the operator
$(\mathcal{S}_n^\bot-\l)^{-1}$ is well-defined in and bounded as
that from $V_n$ in $\Wo^2(\om)$.

It follows from (\ref{3.13c}) that
\begin{equation}\label{3.13b}
\left(\k_1\frac{\p}{\p\xi_2}-\k_2\frac{\p}{\p\xi_3}\right)\phi_n\in
V_n.
\end{equation}
The identity (\ref{3.16}) is satisfied due to (\ref{3.22}) and it
yields $\l_0^{(n,m)}\psi_0^{(n,m)}+F_2^{(n,m)}\in V_n$. Hence, by
(\ref{3.10}), (\ref{3.13b}) we have
$(\widetilde{\mathcal{F}}+\l_0^{(n,m)})\psi_0^{(n,m)}\in V_n$.
Taking into account this fact, (\ref{3.10}), and (\ref{3.9b}), we
return back to the problem (\ref{3.15}), and write its solution as
\begin{align}
& \psi_2^{(n,m)}(s,\xi)=\widetilde{\psi}_2^{(n,m)}(s,\xi)
+\frac{1}{2}\Psi_1^{(n,m)}(s) \phi_n(\xi)\mathrm{q}(s,\xi)
+\Psi_2^{(n,m)}(s)\phi_n(\xi), \label{3.23}
\\
&\widetilde{\psi}_2^{(n,m)}:=(\mathcal{S}_n^\bot-\l_n)^{-1}
(\widetilde{\mathcal{F}}+\l_0^{(n,m)})\psi_0^{(n,m)},\nonumber
\end{align}
where the function $\Psi_2^{(n,m)}$ is unknown and should satisfy
the boundary conditions
\begin{equation*}
\Psi_2^{(n,m)}(0)=\Psi_2^{(n,m)}(s_0)=0.
\end{equation*}
Bearing in mind the belongings $\phi_n\in C^\infty(\overline{\om})$,
$\k_i\in C^\infty[0,s_0]$, and the identities (\ref{3.12a}),
(\ref{1.0b}), it is easy to see that the function
$(\widetilde{\mathcal{F}}+\l_0^{(n,m)})\psi_0^{(n,m)}$ can be
represented as a finite sum
\begin{equation*}
(\widetilde{\mathcal{F}}+\l_0^{(n,m)})\psi_0^{(n,m)}=\sum\limits_{j}
\Psi_{2,j}^{(n,m)}(s)F_{2,j}^{(n,m)}(\xi),
\end{equation*}
where $\Psi_{2,j}^{(n,m)}\in C^\infty[0,s_0]$, $F_{2,j}^{(n,m)}\in
C^\infty(\overline{\om})\cap V_n$. Thus,
\begin{equation*}
\widetilde{\psi}_2^{(n,m)}(s,\xi)=\sum\limits_{j}
\Psi_{2,j}^{(n,m)}(s) \phi_{2,j}^{(n,m)}(\xi),\quad
\phi_{2,j}^{(n,m)}=(\mathcal{S}_n^\bot-\l_n)^{-1}F_{2,j}^{(n,m)}.
\end{equation*}
By the smoothness improving theorems $\phi_{2,j}^{(n,m)}\in
C^\infty(\overline{\om})$, and therefore
$\widetilde{\psi}_2^{(n,m)}\in C^\infty(\overline{\Om})$.

We substitute the formulas (\ref{3.5a}), (\ref{3.7}), (\ref{3.8}),
(\ref{3.8a}), (\ref{3.9a}), (\ref{3.13}), (\ref{3.16a}),
(\ref{3.12a}), (\ref{3.23}) into the problem (\ref{3.5}) for $i=3$,
\begin{align}
&
\begin{aligned}
(-\D_\xi-\l_n)\psi_3^{(n,m)}&=\l_0^{(n,m)}\psi_1^{(n,m)}+
\l_1^{(n,m)}\psi_0^{(n,m)}+F_3^{(n,m)} \quad \text{in}\quad \Om,
\\
\psi_3^{(n,m)}&=0\quad \text{on}\quad \p\Om,
\end{aligned}
\label{3.25}
\\
&
\begin{aligned}
F_3^{(n,m)}=&\widetilde{F}_3^{(n,m)}+\widetilde{\mathcal{F}}\Psi_1^{(n,m)}\phi_n-
\Psi_2^{(n,m)}\left(\k_1\frac{\p}{\p\xi_2}-\k_2\frac{\p}{\p\xi_3}
\right)\phi_n,
\\
\widetilde{F}_3^{(n,m)}:=
&(\mathcal{F}_1-\l_n\mathrm{q})\widetilde{\psi}_2^{(n,m)}+\frac{1}{2}
\mathcal{F}_2\mathrm{q}\psi_0^{(n,m)}+(\mathcal{F}_3-\l_0^{(n,m)}\mathrm{q})
\psi_0^{(n,m)}.
\end{aligned}\label{3.26}
\end{align}
We again treat this problem as that for $\mathcal{S}$ and depending
on $s$, and the corresponding solvability condition is
\begin{equation}\label{3.27}
\l_0^{(n,m)}(\psi_1^{(n,m)},\phi_n)_{L_2(\om)}+\l_1^{(n,m)}
\Psi_0^{(n,m)}+(F_3^{(n,m)},\phi_n)_{L_2(\om)}=0.
\end{equation}
In the same way how the equation (\ref{3.22}) was derived we obtain
\begin{align}
&(\mathcal{L}_n-\l_0^{(n,m)})\Psi_1^{(n,m)}
=\l_1^{(n,m)}\Psi_0^{(n,m)}+f_3^{(n,m)},\label{3.28}
\\
&f_3^{(n,m)}(s):=(\widetilde{F}_3^{(n,m)}(s,\cdot),\phi_n)_{L_2(\om)}+\frac{1}{2}
\l_0^{(n,m)}\Psi_0^{(n,m)}(s)\mathrm{q}_n(s),\nonumber
\\
&\mathrm{q}_n(s):=(\mathrm{q}(s,\cdot)\phi_n,\phi_n)_{L_2(\om)}=\k_1(s)
(\xi_2\phi_n,\phi_n)_{L_2(\om)}-\k_2(s)(\xi_3\phi_n,\phi_n)_{L_2(\om)}.
\nonumber
\end{align}
Since $\l_0^{(n,m)}$ is an eigenvalue of $\mathcal{L}_n$, the
solvability condition of the last equation is the orthogonality in
$L_2(0,s_0)$ of its right hand side to the eigenfunctions associated
with $\l_0^{(n,m)}$, i.e., it should be orthogonal to
$\Psi_0^{(n,m)}$. It gives the formula for $\l_1^{(n,m)}$,
\begin{equation}\label{3.29}
\begin{aligned}
\l_1^{(n,m)}&=-(f_3^{(n,m)},\Psi_0^{(n,m)})_{L_2(0,s_0)}
\\
&=
-(\widetilde{F}_3^{(n,m)},\psi_0^{(n,m)})_{L_2(\Om)}-\frac{1}{2}\l_0^{(n,m)}
(\psi_0^{(n,m)},\mathrm{q}\psi_0^{(n,m)})_{L_2(\Om)}.
\end{aligned}
\end{equation}
Let us calculate the right hand side of this identity. Integrating
by parts and employing (\ref{3.8a}), (\ref{3.9b}), (\ref{3.12b}),
(\ref{3.18c}) we have
\begin{equation}
\begin{aligned}
\big((\mathcal{F}_1-&\mathrm{q}\l_n)\widetilde{\psi}_2^{(n,m)},
\psi_0^{(n,m)}\big)_{L_2(\Om)} =\big(\widetilde{\psi}_2^{(n,m)},
(\mathcal{F}_1-\mathrm{q}\l_n)\psi_0^{(n,m)}\big)_{L_2(\Om)}
\\
&=-\frac{1}{2} \big(\widetilde{\psi}_2^{(n,m)},
(\D_\xi+\l_n)\mathrm{q}\psi_0^{(n,m)}\big)_{L_2(\Om)}
\\
&=-\frac{1}{2} \big((\D_\xi+\l_n)\widetilde{\psi}_2^{(n,m)},
\mathrm{q}\psi_0^{(n,m)}\big)_{L_2(\Om)}
\\
&=\frac{1}{2} \big(
(\widetilde{\mathcal{F}}+\l_0^{(n,m)})\psi_0^{(n,m)},\mathrm{q}\psi_0^{(n,m)}
\big)_{L_2(\Om)}
\\
&=\frac{1}{2}
\left(\frac{1}{2}(\mathcal{F}_1-\mathrm{q}\l_n)\mathrm{q}\psi_0^{(n,m)}
+(\mathcal{F}_2+\l_0^{(n,m)})\psi_0^{(n,m)},
\mathrm{q}\psi_0^{(n,m)}\right)_{L_2(\Om)}.
\end{aligned}\label{3.20a}
\end{equation}
Employing the eigenvalue equation for $\phi_n$, by direct
calculations we check
\begin{equation*}
(\mathcal{F}_1-\mathrm{q}\l_n)\mathrm{q}\psi_0^{(n,m)}=-3\mathrm{q}
\left(\k_1\frac{\p}{\p\xi_2}-\k_2\frac{\p}{\p\xi_3}
\right)\psi_0^{(n,m)}-(\k_1^2+\k_2^2)\psi_0^{(n,m)}.
\end{equation*}
We again integrate by parts,
\begin{align*}
\big( (\mathcal{F}_1-\mathrm{q}\l_n)\mathrm{q}\psi_0^{(n,m)},
\mathrm{q}\psi_0^{(n,m)}\big)_{L_2(\Om)} =&-3 \left(\mathrm{q}^2
\left(\k_1\frac{\p}{\p\xi_2} -\k_2
\frac{\p}{\p\xi_3}\right)\psi_0^{(n,m)},
\psi_0^{(n,m)}\right)_{L_2(\Om)}
\\
& -\big((\k_1^2+\k_2^2)\psi_0^{(n,m)}, \mathrm{q}\psi_0^{(n,m)}
\big)_{L_2(\Om)}
\\
=&-\frac{3}{2} \left(\mathrm{q}^2, \left(\k_1\frac{\p}{\p\xi_2}-\k_2
\frac{\p}{\p\xi_3}\right)\big(\psi_0^{(n,m)}\big)^2
\right)_{L_2(\Om)}
\\
 &-\big((\k_1^2+\k_2^2)\psi_0^{(n,m)},
\mathrm{q}\psi_0^{(n,m)} \big)_{L_2(\Om)}
\\
=&2\big((\k_1^2+\k_2^2)\psi_0^{(n,m)}, \mathrm{q}\psi_0^{(n,m)}
\big)_{L_2(\Om)}.
\end{align*}
Substituting the identities obtained into (\ref{3.20a}), we arrive
at
\begin{equation}
\big((\mathcal{F}_1-\mathrm{q}\l_n)\widetilde{\psi}_2^{(n,m)},
\psi_0^{(n,m)}\big)_{L_2(\Om)}=\frac{1}{2} \left(
(\mathcal{F}_2+\k_1^2+\k_2^2+\l_0^{(n,m)})\psi_0^{(n,m)},\mathrm{q}\psi_0^{(n,m)}
\right)_{L_2(\Om)}. \label{3.20c}
\end{equation}
It follows from (\ref{3.7}) that
\begin{equation*}
\mathcal{F}_3=\mathrm{q}\mathcal{F}_2+\frac{\p \mathrm{q}}{\p
s}\frac{\p}{\p s}+(\mathcal{R}\mathrm{q}) \k_3\frac{\p}{\p
s}+\frac{\p \mathrm{q}}{\p s} \k_3 \mathcal{R}+
\k_3^2(\mathcal{R}\mathrm{q})\mathcal{R}.
\end{equation*}
We substitute this identity and (\ref{3.20c}) into (\ref{3.29}) and
integrate by parts,
\begin{equation}\label{3.21a}
\begin{aligned}
\l_1^{(n,m)}=& -\frac{1}{2} \left(
(\mathcal{F}_2+\k_1^2+\k_2^2)\psi_0^{(n,m)},
\mathrm{q}\psi_0^{(n,m)} \right)_{L_2(\Om)}
\\
&-\frac{1}{2} \left(\mathcal{F}_2
\mathrm{q}\psi_0^{(n,m)},\psi_0^{(n,m)} \right)_{L_2(\Om)}
-\left(\mathrm{q}\mathcal{F}_2 \psi_0^{(n,m)},\psi_0^{(n,m)}
\right)_{L_2(\Om)}
\\
&- \left(\frac{\p\mathrm{q}}{\p s}\frac{\p\psi_0^{(n,m)}}{\p s},
\psi_0^{(n,m)}\right)_{L_2(\Om)} -
\left(\k_3\frac{\p\psi_0^{(n,m)}}{\p s}\mathcal{R}\mathrm{q},
\psi_0^{(n,m)}\right)_{L_2(\Om)}
\\
&-\left(\k_3\frac{\p\mathrm{q}}{\p s}\mathcal{R}\psi_0^{(n,m)},
\psi_0^{(n,m)}\right)_{L_2(\Om)}
-\left(\k_3^2\mathcal{R}\psi_0^{(n,m)},
\psi_0^{(n,m)}\mathcal{R}\mathrm{q}\right)_{L_2(\Om)}
\\
=&-2\big(\mathcal{F}_2\psi_0^{(n,m)},
\mathrm{q}\psi_0^{(n,m)}\big)_{L_2(\Om)}  -\frac{1}{2}
\big((\k_1^2+\k_2^2)\psi_0^{(n,m)},\mathrm{q}\psi_0^{(n,m)}\big)_{L_2(\Om)}
\\
&-\left(\frac{\p\mathrm{q}}{\p s}\frac{\p\psi_0^{(n,m)}}{\p s},
\psi_0^{(n,m)}\right)_{L_2(\Om)} -
\left(\k_3\frac{\p\psi_0^{(n,m)}}{\p s}\mathcal{R}\mathrm{q},
\psi_0^{(n,m)}\right)_{L_2(\Om)}
\\
&-\left(\k_3\frac{\p\mathrm{q}}{\p s}\mathcal{R}\psi_0^{(n,m)},
\psi_0^{(n,m)}\right)_{L_2(\Om)}
-\left(\k_3^2\mathcal{R}\psi_0^{(n,m)},
\psi_0^{(n,m)}\mathcal{R}\mathrm{q}\right)_{L_2(\Om)}.
\end{aligned}
\end{equation}
In view of (\ref{3.5a}), (\ref{3.22}) we have
\begin{equation}\label{3.32}
\begin{aligned}
-\frac{1}{2}
&\big((\k_1^2+\k_2^2)\psi_0^{(n,m)},\mathrm{q}\psi_0^{(n,m)}\big)_{L_2(\Om)}
-2\big(\mathcal{F}_2\psi_0^{(n,m)},
\mathrm{q}\psi_0^{(n,m)}\big)_{L_2(\Om)}
\\
=& 2\big( (\l_0^{(n,m)}-C_n(\om)\k_3^2)
\psi_0^{(n,m)},\mathrm{q}\psi_0^{(n,m)} \big)_{L_2(\Om)} -2
\left(\mathcal{R}\k_3  \frac{\p\psi_0^{(n,m)}}{\p s},\mathrm{q}
\psi_0^{(n,m)}\right)_{L_2(\Om)}
\\
&-2 \left(\frac{\p}{\p s}\k_3 \mathcal{R}\psi_0^{(n,m)},\mathrm{q}
\psi_0^{(n,m)}\right)_{L_2(\Om)} -2 \big(
\k_3^2\mathcal{R}^2\psi_0^{(n,m)},\mathrm{q}\psi_0^{(n,m)}
\big)_{L_2(\Om)}.
\end{aligned}
\end{equation}
We integrate by parts employing (\ref{3.8}),
\begin{equation}\label{3.33}
\begin{aligned}
-&2 \left(\mathcal{R}\k_3  \frac{\p\psi_0^{(n,m)}}{\p s},\mathrm{q}
\psi_0^{(n,m)}\right)_{L_2(\Om)}-2 \left(\frac{\p}{\p s}\k_3
\mathcal{R}\psi_0^{(n,m)},\mathrm{q}
\psi_0^{(n,m)}\right)_{L_2(\Om)}
\\
-& \left(\k_3\frac{\p\psi_0^{(n,m)}}{\p s}\mathcal{R}\mathrm{q},
\psi_0^{(n,m)}\right)_{L_2(\Om)} -\left(\k_3\frac{\p\mathrm{q}}{\p
s}\mathcal{R}\psi_0^{(n,m)}, \psi_0^{(n,m)}\right)_{L_2(\Om)}
\\
=& 2 \left(\k_3 \frac{\p\psi_0^{(n,m)}}{\p s},\mathcal{R}\mathrm{q}
\psi_0^{(n,m)}\right)_{L_2(\Om)}+2 \left(\k_3
\mathcal{R}\psi_0^{(n,m)},\frac{\p}{\p s}\mathrm{q}
\psi_0^{(n,m)}\right)_{L_2(\Om)}
\\
&- \left(\k_3\frac{\p\psi_0^{(n,m)}}{\p s}\mathcal{R}\mathrm{q},
\psi_0^{(n,m)}\right)_{L_2(\Om)} -\left(\k_3\frac{\p\mathrm{q}}{\p
s}\mathcal{R}\psi_0^{(n,m)}, \psi_0^{(n,m)}\right)_{L_2(\Om)}
\\
= &\left(\k_3\frac{\p\psi_0^{(n,m)}}{\p s},
\psi_0^{(n,m)}\mathcal{R}\mathrm{q}\right)_{L_2(\Om)}+
\left(\k_3\frac{\p\mathrm{q}}{\p s}\mathcal{R}\psi_0^{(n,m)},
\psi_0^{(n,m)}\right)_{L_2(\Om)}
\\
&+4
\left(\k_3\mathrm{q}\mathcal{R}\psi_0^{(n,m)},\frac{\p\psi_0^{(n,m)}}{\p
s}\right)_{L_2(\Om)}
\\
=&\frac{1}{2}\left(\k_3 \mathcal{R}\mathrm{q},\frac{\p}{\p s}
\big(\psi_0^{(n,m)}\big)^2\right)_{L_2(\Om)} + \frac{1}{2}
\left(\k_3\frac{\p\mathrm{q}}{\p
s},\mathcal{R}\big(\psi_0^{(n,m)}\big)^2 \right)_{L_2(\Om)}
\\
&+ \left( \k_3\mathrm{q}\mathcal{R}\phi_n^2,\frac{\p}{\p s}
\big(\Psi_0^{(n,m)}\big)^2 \right)_{L_2(\Om)}
\\
=&-\frac{1}{2}\left(\frac{\p}{\p s}\k_3 \mathcal{R}\mathrm{q},
\big(\psi_0^{(n,m)}\big)^2\right)_{L_2(\Om)} - \frac{1}{2}
\left(\mathcal{R}\k_3\frac{\p\mathrm{q}}{\p
s},\big(\psi_0^{(n,m)}\big)^2 \right)_{L_2(\Om)}
\\
&+ \left( \phi_n^2 \big(\Psi_0^{(n,m)}\big)^2, \frac{\p}{\p
s}\k_3\mathcal{R}\mathrm{q}\right)_{L_2(\Om)}
\\
 = &\frac{1}{2} \left(\psi_0^{(n,m)},\psi_0^{(n,m)}\frac{\p}{\p s}
\k_3\mathcal{R}\mathrm{q}\right)_{L_2(\Om)}-\frac{1}{2}
\left(\psi_0^{(n,m)},\psi_0^{(n,m)}\k_3
\mathcal{R}\frac{\p\mathrm{q}}{\p s}\right)_{L_2(\Om)}
\\
= &\frac{1}{2} \left(\psi_0^{(n,m)},\psi_0^{(n,m)}\k'_3
\mathcal{R}\mathrm{q}\right)_{L_2(\Om)}.
\end{aligned}
\end{equation}
In the same fashion we obtain
\begin{equation}\label{3.36a}
-\left(\frac{\p\mathrm{q}}{\p s}\frac{\p\psi_0^{(n,m)}}{\p s},
\psi_0^{(n,m)}\right)_{L_2(\Om)}=\frac{1}{2} \left(\psi_0^{(n,m)},
\frac{\p^2\mathrm{q}}{\p s^2} \psi_0^{(n,m)}\right)_{L_2(\Om)}.
\end{equation}
Employing the identity $\mathcal{R}^2\mathrm{q}=-\mathrm{q}$, we
have
\begin{align*}
-&\left(\k_3^2\mathcal{R}\psi_0^{(n,m)},
\psi_0^{(n,m)}\mathcal{R}\mathrm{q}\right)_{L_2(\Om)} -2 \big(
\k_3^2\mathcal{R}^2\psi_0^{(n,m)},\mathrm{q}\psi_0^{(n,m)}
\big)_{L_2(\Om)}
\\
& = -\left(\k_3^2\mathcal{R}\psi_0^{(n,m)},
\psi_0^{(n,m)}\mathcal{R}\mathrm{q}\right)_{L_2(\Om)} +2 \big(
\k_3^2\mathcal{R}\psi_0^{(n,m)},\mathcal{R}\mathrm{q}\psi_0^{(n,m)}
\big)_{L_2(\Om)}
\\
&=2 \big(
\k_3^2\mathcal{R}\psi_0^{(n,m)},\mathrm{q}\mathcal{R}\psi_0^{(n,m)}
\big)_{L_2(\Om)}+\big( \k_3^2\mathcal{R}\psi_0^{(n,m)},
\psi_0^{(n,m)}\mathcal{R}\mathrm{q} \big)_{L_2(\Om)}
\\
&=2 \big(
\k_3^2\mathcal{R}\psi_0^{(n,m)},\mathrm{q}\mathcal{R}\psi_0^{(n,m)}
\big)_{L_2(\Om)}+ \frac{1}{2} \big(
\k_3^2\mathcal{R}(\psi_0^{(n,m)})^2, \mathcal{R}\mathrm{q}
\big)_{L_2(\Om)}
\\
&=2 \big(
\k_3^2\mathcal{R}\psi_0^{(n,m)},\mathrm{q}\mathcal{R}\psi_0^{(n,m)}
\big)_{L_2(\Om)}-\frac{1}{2} \big(
\k_3^2\psi_0^{(n,m)},\psi_0^{(n,m)}\mathcal{R}^2\mathrm{q}
\big)_{L_2(\Om)}
\\
&= 2 \big(
\k_3^2\mathcal{R}\psi_0^{(n,m)},\mathrm{q}\mathcal{R}\psi_0^{(n,m)}
\big)_{L_2(\Om)}+\frac{1}{2}(\k_3^2\psi_0^{(n,m)},
\mathrm{q}\psi_0^{(n,m)})_{L_2(\Om)}.
\end{align*}
We substitute the identities obtained and (\ref{3.32}),
(\ref{3.33}), (\ref{3.36a}) into (\ref{3.21a}) and arrive at the
formula (\ref{1.2}) for $\l_1^{(n,m)}$.

Let $V_{n,m}$ be the orthogonal complement to $\{\Psi_0^{(n,m)}\}$
in $L_2(0,s_0)$, and $\mathcal{L}_{n,m}^\bot$ be the restriction of
$\mathcal{L}_n$ to $V_{n,m}\cap \Wo^2(0,s_0)$. The operator
$(\mathcal{L}_{n,m}^\bot-\l_0^{(n,m)})^{-1}$ is well-defined and
bounded as that from $V_{n,m}$ into $V_{n,m}\cap\Wo^2(0,s_0)$.

The orthogonality condition (\ref{3.29}) means that the right hand
side of (\ref{3.28}) is orthogonal to $\Psi_0^{(n,m)}$. Thus,
$\l_1^{(n,m)}\Psi_0^{(n,m)}+f_3^{(n,m)}\in V_{n,m}$. In view of this
fact we can choose a solution to (\ref{3.28}) as
\begin{equation*}
\Psi_1^{(n,m)}=(\mathcal{L}_{n,m}^\bot-\l_0^{(n,m)})^{-1}
(\l_1^{(n,m)}\Psi_0^{(n,m)}+f_3^{(n,m)}).
\end{equation*}
By the smoothness improving theorems $\Psi_1^{(n,m)}\in
C^\infty[0,s_0]$.

The equations (\ref{3.27}), (\ref{3.28}) being satisfied, the right
hand side of the equation in (\ref{3.25}) is orthogonal to $\phi_n$,
i.e.,
\begin{equation*}
\l_0^{(n,m)}\psi_1^{(n,m)}+\l_1^{(n,m)}\psi_0^{(n,m)}+F_3^{(n,m)}\in
V_n.
\end{equation*}
By (\ref{3.13b}) it yields
\begin{equation*}
\l_0^{(n,m)}\psi_1^{(n,m)}+\l_1^{(n,m)}\psi_0^{(n,m)}+\widetilde{F}_3^{(n,m)}
+\widetilde{\mathcal{F}}\Psi_1^{(n,m)}\phi_n \in V_n.
\end{equation*}
In view of this fact we can choose a solution to (\ref{3.25})
as
\begin{align*}
\psi_3^{(n,m)}(s,\xi)=&\widetilde{\psi}_3^{(n,m)}(s,\xi)
+\frac{1}{2}\Psi_2^{(n,m)}(s) \phi_n(\xi)\mathrm{q}(s,\xi)
+\Psi_3^{(n,m)}(s)\phi_n(\xi),
\\
\widetilde{\psi}_3^{(n,m)}:=&(\mathcal{S}_n^\bot-\l_n)^{-1}
\big(\l_0^{(n,m)}\psi_1^{(n,m)}+ \l_1^{(n,m)}\psi_0^{(n,m)}
+\widetilde{F}_3^{(n,m)}+\widetilde{\mathcal{F}}
\Psi_1^{(n,m)}\phi_n\big),
\end{align*}
where the function $\Psi_3^{(n,m)}$ is unknown  and should satisfy
the boundary conditions
\begin{equation*}
\Psi_3^{(n,m)}(0)=\Psi_3^{(n,m)}(s_0)=0,
\end{equation*}
and $\widetilde{\psi}_3^{(n,m)}\in C^\infty(\overline{\Om})$. This
smoothness is proved in the same way as for
$\widetilde{\psi}_2^{(n,m)}$.

The remaining problems for $\psi_i$, $i\geqslant 4$, are solved in
the same way as for $i=1,2,3$. Namely, the solvability condition of
these problems is the orthogonality of the right hand side to
$\phi_n$ in $L_2(\om)$ for each $s\in(0,s_0)$. In its turn, these
condition imply the problems for $\Psi_i$. The solvability
conditions of these problems imply the formulas for $\l_i$. The
result of this recurrent procedure is formulated in

\begin{lmm}\label{lm3.1}
There exist solutions to the problems (\ref{3.5}) given by the
identities
\begin{equation}\label{3.23b}
\psi_i^{(n,m)}(s,\xi)=\widetilde{\psi}_i^{(n,m)}(s,\xi)
+\frac{1}{2}\Psi_{i-1}^{(n,m)}(s)\mathrm{q}(s,\xi)\phi_n(\xi)
+\Psi_i^{(n,m)}(s)\phi_n(\xi),\quad i\geqslant 0.
\end{equation}
The functions $\widetilde{\psi}_i^{(n,m)}\in
C^\infty(\overline{\Om})$, $\Psi_i^{(n,m)}\in C^\infty[0,s_0]$ read
as follows,
\begin{align}
& \widetilde{\psi}_i^{(n,m)}(s,\xi)=0,\quad i\leqslant 1, \nonumber
\\
&
\begin{aligned}
\widetilde{\psi}_i^{(n,m)}(s,\xi)=(\mathcal{S}_n^\bot-\l_n)^{-1}
\Bigg(&\widetilde{F}_i^{(n,m)}+\widetilde{\mathcal{F}} \Psi_{i-2}^{(n,m)}\phi_n
+\sum\limits_{j=2}^{i} \l_{j-2}^{(n,m)}\psi_{i-j}^{(n,m)}\Bigg),\quad i\geqslant
2,
\end{aligned}\nonumber
\\
&\widetilde{F}_i^{(n,m)}=0,\quad i\leqslant 2,\nonumber
\\
&
\begin{aligned}
\widetilde{F}_i^{(n,m)}=&(\mathcal{F}_1-\l_n\mathrm{q})
\widetilde{\psi}_{i-1}^{(n,m)} + \mathcal{F}_2 \left(
\widetilde{\psi}_{i-2}^{(n,m)} + \frac{1}{2} \Psi_{i-3}^{(n,m)}\mathrm{q}\phi_n
\right)
\\
&+ \sum\limits_{j=3}^{i}
(\mathcal{F}_j-\l_{j-3}^{(n,m)}\mathrm{q})\psi_{i-j}^{(n,m)},\quad
i\geqslant 3,
\end{aligned}\label{3.23a}
\\
&\Psi^{(n,m)}_i=(\mathcal{L}_{n,m}^\bot-\l_0^{(n,m)})^{-1} \left(
f_{i+2}^{(n,m)}  +\sum\limits_{j=1}^{i} \l_j^{(n,m)}
\Psi_{i-j}^{(n,m)}\right),\quad i\geqslant 1,\nonumber
\\
& f_i^{(n,m)}=0,\quad i\leqslant 2,
\qquad
 f_i^{(n,m)}=(\widetilde{F}_i,\psi_0^{(n,m)})_{L_2(\Om)} +
\frac{1}{2}\sum\limits_{j=0}^{i-3}\l_j^{(n,m)} \Psi_{i-j-3}^{(n,m)}
\mathrm{q}_n,\quad i\geqslant 3,\nonumber
\end{align}
where $\l_{-2}^{(n,m)}$ and $\l_{-1}^{(n,m)}$ are determined by (\ref{3.8}),
(\ref{3.9a}), and
\begin{equation}\label{3.42}
\l_i^{(n,m)}=-(\widetilde{F}_{i+2}^{(n,m)},\psi_0^{(n,m)})_{L_2(\Om)}
-\frac{1}{2} \sum\limits_{j=0}^{i-1} \l_j^{(n,m)}
\big(\Psi_{i-j-1}^{(n,m)}\phi_n,\mathrm{q}\psi_0^{(n,m)}\big)_{L_2(\Om)},\quad
i\geqslant 1.
\end{equation}
\end{lmm}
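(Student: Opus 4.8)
The plan is to prove Lemma~\ref{lm3.1} by induction on the index $i$, the cases $i\leqslant 3$ being already settled by the explicit computations preceding the statement. Indeed, for $i\leqslant 1$ one has $\widetilde{\psi}_i^{(n,m)}=0$, $\widetilde F_i^{(n,m)}=f_i^{(n,m)}=0$, so that (\ref{3.8}) and (\ref{3.13}) are exactly (\ref{3.23b}) with the convention $\Psi_{-1}^{(n,m)}=0$; for $i=2$ the representation (\ref{3.23}) together with equation (\ref{3.22}), and for $i=3$ the displayed formula for $\psi_3^{(n,m)}$ together with (\ref{3.29}), coincide with the assertions of the lemma. For the inductive step (so $i\geqslant 4$) I would assume that all coefficients with index smaller than $i$ have the stated form and smoothness, that $\l_k^{(n,m)}$ is given by (\ref{3.42}) for $k\leqslant i-3$, and that (\ref{3.5}) holds for those indices.

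The core of the step is the analysis of problem (\ref{3.5}) for the index $i$. Substituting (\ref{3.23b}) for the lower coefficients into (\ref{3.5a}), (\ref{3.7}), one checks that its right-hand side is the sum of a combination of already constructed quantities, a term $-\Psi_{i-1}^{(n,m)}\bigl(\k_1\frac{\p}{\p\xi_2}-\k_2\frac{\p}{\p\xi_3}\bigr)\phi_n$ --- coming via $(\mathcal{F}_1-\l_n\mathrm{q})$ from the $\Psi_{i-1}^{(n,m)}\phi_n$-part of $\psi_{i-1}^{(n,m)}$ and lying in $V_n$ by (\ref{3.8a}), (\ref{3.13b}); here one also uses $\l_{-1}^{(n,m)}=0$ to discard the term $\l_{-1}^{(n,m)}\psi_{i-1}^{(n,m)}$ from (\ref{3.5}) --- and contributions of the still unknown function $\Psi_{i-2}^{(n,m)}$. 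Because $\l_n$ is a simple eigenvalue of $\mathcal{S}$, the Fredholm alternative applied to (\ref{3.5}), viewed as a problem for $\mathcal{S}$ in $\om$ with $s$ a parameter, yields the solvability condition: orthogonality of the right-hand side to $\phi_n$ in $L_2(\om)$ for each $s\in(0,s_0)$. The $V_n$-valued term drops out of this condition, and the remaining part reassembles --- exactly as (\ref{3.28}) was obtained from (\ref{3.27}) by the integration-by-parts identities of the type (\ref{3.13c}), (\ref{3.16a}) --- into an inhomogeneous equation $(\mathcal{L}_n-\l_0^{(n,m)})\Psi_{i-2}^{(n,m)}=\l_{i-2}^{(n,m)}\Psi_0^{(n,m)}+(\text{terms already determined})$ with Dirichlet conditions at $s=0,s_0$. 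Since $\l_0^{(n,m)}$ is a simple eigenvalue of $\mathcal{L}_n$, this equation is solvable if and only if its right-hand side is orthogonal to $\Psi_0^{(n,m)}$ in $L_2(0,s_0)$, and --- repeating the integrations by parts that produced (\ref{3.29}) in the case $i-2=1$ --- this condition gives the closed formula (\ref{3.42}) for $\l_{i-2}^{(n,m)}$. With $\l_{i-2}^{(n,m)}$ so fixed one takes $\Psi_{i-2}^{(n,m)}=(\mathcal{L}_{n,m}^\bot-\l_0^{(n,m)})^{-1}(\cdots)\in V_{n,m}$, and then, undoing the reduction, writes $\psi_i^{(n,m)}$ in the form (\ref{3.23b}): applying $(\mathcal{S}_n^\bot-\l_n)^{-1}$ to the $V_n$-valued part of the source gives $\widetilde{\psi}_i^{(n,m)}$, the term $-\Psi_{i-1}^{(n,m)}(\cdots)\phi_n$ produces, via (\ref{3.9b}), the summand $\frac12\Psi_{i-1}^{(n,m)}\mathrm{q}\phi_n$, and $\Psi_i^{(n,m)}\phi_n$ is the remaining homogeneous piece whose function $\Psi_i^{(n,m)}$ will be determined two steps later.

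Smoothness is propagated exactly as was done for $\widetilde{\psi}_2^{(n,m)}$: the source inverted by $(\mathcal{S}_n^\bot-\l_n)^{-1}$ is a finite sum $\sum_j\Psi_{i,j}^{(n,m)}(s)\,F_{i,j}^{(n,m)}(\xi)$ with $\Psi_{i,j}^{(n,m)}\in C^\infty[0,s_0]$ and $F_{i,j}^{(n,m)}\in C^\infty(\overline{\om})\cap V_n$, because every ingredient ($\phi_n$, the lower $\widetilde{\psi}_j^{(n,m)}$ and $\Psi_j^{(n,m)}$, the $\k_i$, and $\mathrm{q}$) is smooth and the operators in (\ref{3.7}) have smooth coefficients; the smoothness improving theorems then yield $\widetilde{\psi}_i^{(n,m)}\in C^\infty(\overline{\Om})$, and likewise $\Psi_i^{(n,m)}\in C^\infty[0,s_0]$.

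The calculation is routine once the bookkeeping is arranged, and it is precisely this bookkeeping that I expect to be the \emph{main obstacle}: one has to verify that the only coefficients with index at least $i$ entering the right-hand side of (\ref{3.5}) at step $i$ are the harmless $\Psi_{i-1}^{(n,m)}\phi_n$ and $\frac12\Psi_{i-2}^{(n,m)}\mathrm{q}\phi_n$ sitting inside $\psi_{i-1}^{(n,m)}$ and $\psi_{i-2}^{(n,m)}$, so that the nested solvability scheme (first for $\mathcal{S}$, then for $\mathcal{L}_n$) is well posed; and one has to reproduce, at a general index, the algebraic identities of the type (\ref{3.8a}), (\ref{3.9b}), (\ref{3.16a}) in order to convert the abstract solvability conditions into the explicit recursion stated in the lemma.
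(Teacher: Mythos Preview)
Your proposal is correct and follows exactly the approach of the paper: the paper handles $i=0,1,2,3$ explicitly in the text preceding Lemma~\ref{lm3.1} and then states that ``the remaining problems for $\psi_i$, $i\geqslant 4$, are solved in the same way as for $i=1,2,3$,'' with the nested solvability conditions (first for $\mathcal{S}$, then for $\mathcal{L}_n$) producing the recursion; your inductive write-up is in fact more detailed than the paper's own treatment, which simply declares the recursive scheme and records its outcome in the lemma.
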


\begin{rmrk}\label{rm3.1}
The formula (\ref{3.23a}) involves the term
$\widetilde{\psi}_{i-2}^{(n,m)}$. It is zero for $i\geqslant 3$, and
this is why it is absent in (\ref{3.10}), (\ref{3.26}). At the same
time, it is easy to see that it comes from the term
$(\mathcal{F}_2-\l_{j-3}^{(n,m)})\psi_{i-2}^{(n,m)}$ in
(\ref{3.5a}), when $\psi_{i-2}^{(n,m)}$ is taken in accordance with
(\ref{3.23b}).
\end{rmrk}

Given  any $N\geqslant 1$, we denote
\begin{equation}\label{3.24a}
\l_{\e,N}^{(n,m)}:=\e^{-2}\l_n+\sum\limits_{i=0}^{N-2}\e^i\l_i^{(n,m)},\quad
\psi_{\e,N}^{(n,m)}(s,\xi):=\sum\limits_{i=0}^{N}\e^i\psi_i^{(n,m)}(s,\xi).
\end{equation}

The next lemma follows directly from Lemma~\ref{lm3.1}.

\begin{lmm}\label{lm3.2}
The function $\psi_{\e,N}^{(n,m)}\in C^\infty(\overline{\Om})$ and
the number $\l_{\e,N}^{(n,m)}$ satisfy the equation
\begin{equation}\label{3.45}
(\widehat{\mathcal{H}}_\e-\l_{\e,N}^{(n,m)}\mathrm{p}_\e)
\psi_{\e,N}^{(n,m)}=h_{\e,N}^{(n,m)},
\end{equation}
where the right-hand side obeys an inequality
\begin{equation}\label{3.46}
\|h_{\e,N}^{(n,m)}\|_{C^k(\overline{\Om})}\leqslant C_{N,k}^{(n,m)}\e^{N-1},
\end{equation}
with constants $C_{N,k}^{(n,m)}>0$ independent of $\e$.
\end{lmm}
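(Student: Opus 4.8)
The plan is to deduce (\ref{3.45})--(\ref{3.46}) from the recursion already solved in Lemma~\ref{lm3.1} by a direct substitution, the whole matter reducing to a careful count of powers of $\e$. The regularity claim is immediate: by Lemma~\ref{lm3.1} each $\psi_i^{(n,m)}$ is the sum of $\widetilde{\psi}_i^{(n,m)}\in C^\infty(\overline{\Om})$ and of $\frac{1}{2}\Psi_{i-1}^{(n,m)}\mathrm{q}\phi_n+\Psi_i^{(n,m)}\phi_n$ with $\Psi_j^{(n,m)}\in C^\infty[0,s_0]$ and $\mathrm{q},\phi_n\in C^\infty(\overline{\Om})$; hence the finite sum $\psi_{\e,N}^{(n,m)}$ from (\ref{3.24a}) belongs to $C^\infty(\overline{\Om})$, so $h_{\e,N}^{(n,m)}$, defined by (\ref{3.45}), is a well-defined element of $C^\infty(\overline{\Om})$ and the only thing left to establish is the bound (\ref{3.46}).

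First I would truncate the operator $\widehat{\mathcal{H}}_\e$ at order $\e^N$. Since $\mathrm{p}_\e=1-\e\mathrm{q}$ with $\mathrm{q}\in C^\infty(\overline{\Om})$, there is $\e_1>0$ so that $\mathrm{p}_\e\geqslant\frac{1}{2}$ on $\overline{\Om}$ for $\e<\e_1$, and then $\mathrm{p}_\e^{-1}$ together with all its $(s,\xi)$-derivatives is bounded uniformly in $\e<\e_1$. Applying Taylor's formula to the entries (\ref{2.3}) one obtains $A_{ij}^{(\e)}=\sum_{k=0}^{N}\e^k A_k^{(ij)}+\e^{N+1}r_{ij}^{(\e)}$ with $A_k^{(ij)}$ as in (\ref{3.4}) and $\|r_{ij}^{(\e)}\|_{C^k(\overline{\Om})}\leqslant C$ for $\e<\e_1$. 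Substituting this into (\ref{2.2}) I would write $\widehat{\mathcal{H}}_\e=\widehat{\mathcal{H}}_\e^{(N)}+\e^{N-1}\mathcal{B}_\e$, where $\widehat{\mathcal{H}}_\e^{(N)}$ is (\ref{2.2}) with every $A_{ij}^{(\e)}$ replaced by its degree-$N$ Taylor polynomial and $\mathcal{B}_\e$ is a second order differential operator whose coefficients are bounded in $C^k(\overline{\Om})$ uniformly in $\e<\e_1$; the power that survives is $\e^{N-1}$, not $\e^{N+1}$, because of the prefactor $\e^{-2}$ in front of the block $\sum_{i,j=2}^{3}\frac{\p}{\p\xi_i}A_{ij}^{(\e)}\frac{\p}{\p\xi_j}$ (the other two blocks carry $\e^{-1}$ and $1$ and contribute only $\Odr(\e^{N})$).

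Then I would split $h_{\e,N}^{(n,m)}=\big(\widehat{\mathcal{H}}_\e^{(N)}-\l_{\e,N}^{(n,m)}\mathrm{p}_\e\big)\psi_{\e,N}^{(n,m)}+\e^{N-1}\mathcal{B}_\e\psi_{\e,N}^{(n,m)}$ and treat the two summands separately. The second one is harmless: $\psi_{\e,N}^{(n,m)}$ is a polynomial of degree $N$ in $\e$ with $C^\infty(\overline{\Om})$ coefficients, so $\|\psi_{\e,N}^{(n,m)}\|_{C^{k+2}(\overline{\Om})}\leqslant C$ for $\e\leqslant1$, whence $\|\e^{N-1}\mathcal{B}_\e\psi_{\e,N}^{(n,m)}\|_{C^k(\overline{\Om})}\leqslant C\e^{N-1}$. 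The first summand is a polynomial in $\e$ with coefficients in $C^\infty(\overline{\Om})$; expanding it in powers of $\e$, I would check that for every $i$ with $0\leqslant i\leqslant N$ its coefficient at $\e^{i-2}$ equals $(-\D_\xi-\l_n)\psi_i^{(n,m)}-\sum_{j=1}^{i}\l_{j-2}^{(n,m)}\psi_{i-j}^{(n,m)}-F_i^{(n,m)}$, which vanishes by (\ref{3.5}). The verification is the point to be done with care: the $\e^{i-2}$-coefficient with $i\leqslant N$ involves only the operators $\mathcal{F}_j$ with $j\leqslant i\leqslant N$ (and $\mathcal{F}_j$ depends on $A_k^{(\cdot)}$ with $k\leqslant j$ only, so it is untouched by the truncation at order $N$), only the numbers $\l_a^{(n,m)}$ with $a\leqslant i-2\leqslant N-2$ (all present in $\l_{\e,N}^{(n,m)}$), and only the functions $\psi_c^{(n,m)}$ with $c\leqslant i\leqslant N$ (all present in $\psi_{\e,N}^{(n,m)}$); hence through order $\e^{N-2}$ the truncated expression agrees with the one obtained from the formal series (\ref{3.1}), (\ref{3.2}), for which this coefficient vanishes by construction. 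Consequently $\big(\widehat{\mathcal{H}}_\e^{(N)}-\l_{\e,N}^{(n,m)}\mathrm{p}_\e\big)\psi_{\e,N}^{(n,m)}=\sum_{i=N+1}^{2N+2}\e^{i-2}G_i^{(n,m)}$ with $G_i^{(n,m)}\in C^\infty(\overline{\Om})$ independent of $\e$, so its $C^k(\overline{\Om})$-norm is bounded by $C\e^{N-1}$ for $\e\leqslant1$; adding the two bounds yields (\ref{3.46}) with some $\e_0\in(0,\e_1]$ and a constant $C_{N,k}^{(n,m)}$ independent of $\e<\e_0$.

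The main obstacle is thus not any individual estimate but precisely this order-tracking: one must be sure that replacing the $\e$-rational coefficients $A_{ij}^{(\e)}$ by their degree-$N$ Taylor polynomials preserves the cancellations of (\ref{3.5})--(\ref{3.7}) through order $\e^{N-2}$, and that the weights $\e^{-2},\e^{-1},1$ in (\ref{2.2}) convert an $\Odr(\e^{N+1})$ error of the coefficients into only an $\Odr(\e^{N-1})$ error in $h_{\e,N}^{(n,m)}$. Once this is verified the lemma is, as announced, a direct consequence of Lemma~\ref{lm3.1}.
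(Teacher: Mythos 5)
The paper does not actually spell out a proof of Lemma~\ref{lm3.2} --- it simply asserts that the lemma "follows directly from Lemma~\ref{lm3.1}." Your argument is a correct and careful write-out of precisely that deduction: you Taylor-truncate the coefficients $A_{ij}^{(\e)}$ at order $N$, observe that the $\e^{-2}$ prefactor in (\ref{2.2}) converts the $\Odr(\e^{N+1})$ truncation error of the coefficients into an $\Odr(\e^{N-1})$ operator error, and then verify --- by tracking which $A_k^{(\cdot)}$, $\l_a^{(n,m)}$, and $\psi_c^{(n,m)}$ enter each $\e^{i-2}$-coefficient --- that the truncated expression reproduces the vanishing relations (\ref{3.5}) through $\e^{N-2}$, leaving a polynomial in $\e$ that starts at $\e^{N-1}$ with $C^\infty(\overline{\Om})$-coefficients. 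The order-counting is correct (in particular your observation that $\mathcal{F}_j$ depends only on $A_k^{(\cdot)}$ with $k\le j$ is the key point that guarantees the truncation does not disturb the cancellations for $i\le N$), the regularity claim is handled properly from the representation (\ref{3.23b}), and the final bound is exactly (\ref{3.46}). In short, the proposal is correct and supplies the bookkeeping the paper left implicit.
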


We proceed to the justification of the formal asymptotics (\ref{3.1}),
(\ref{3.2}). We use the standard approach based on Lemmas~12,~13 from \cite{VL}.
More precisely, we use these lemmas in the formulation presented in Lemma~1.1 in
\cite[Ch. I\!I\!I, Sec. 1.1]{IOS}. For the convenience of the reader below we
give the mentioned lemma.

\begin{lmm}\label{lm3.3}
Let $\mathcal{A}: H\to H$ be a continuous linear compact
self-adjoint operator in a Hilbert space $H$. Suppose that there
exist a real $\mu>0$ and a vector $u\in H$, such that $\|u\|_{H}=1$
and
\begin{equation*}
\|\mathcal{A}u-\mu u\|_{H}\leqslant \a,\quad \a=const>0.
\end{equation*}
Then there exists an eigenvalue $\mu_i$ of operator $\mathcal{A}$
such that
\begin{equation*}
|\mu_i-\mu|\leqslant \a.
\end{equation*}
Moreover, for any $d>a$ there exists a vector $\overline{u}$ such
that
\begin{equation*}
\|u-\overline{u}\|_{H}\leqslant 2\a d^{-1},\quad \|\overline{u}\|_{H}=1,
\end{equation*}
and $\overline{u}$ is a linear combination of eigenvectors of
operator $\mathcal{A}$ corresponding to eigenvalues of $\mathcal{A}$
from the segment $[\mu-d,\mu+d]$.
\end{lmm}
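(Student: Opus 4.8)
This is a classical perturbation‑theoretic statement, sometimes called the lemma on near eigenvalues, and the whole argument reduces to the spectral decomposition of a compact self‑adjoint operator. The plan is to invoke the spectral theorem (Hilbert--Schmidt) to obtain an orthonormal basis $\{e_k\}$ of $H$ consisting of eigenvectors of $\mathcal{A}$, with $\mathcal{A}e_k=\mu_k e_k$ and $\mu_k\in\mathds{R}$; the vectors in $\ker\mathcal{A}$ carry $\mu_k=0$, while the nonzero $\mu_k$ have finite multiplicity and accumulate only at the origin. Expanding $u=\sum_k c_k e_k$, the hypotheses become $\sum_k|c_k|^2=1$ and
\begin{equation*}
\|\mathcal{A}u-\mu u\|_H^2=\sum_k|\mu_k-\mu|^2|c_k|^2\leqslant\a^2.
\end{equation*}

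For the first assertion I would argue by contradiction. If $|\mu_k-\mu|>\a$ held for every eigenvalue $\mu_k$, then, picking any index $k_0$ with $c_{k_0}\neq0$ (which exists since $\|u\|_H=1$) and combining $|\mu_{k_0}-\mu|^2|c_{k_0}|^2>\a^2|c_{k_0}|^2$ with $|\mu_k-\mu|^2|c_k|^2\geqslant\a^2|c_k|^2$ for the other indices, the sum above would be strictly larger than $\a^2\sum_k|c_k|^2=\a^2$, contradicting the displayed inequality. Hence some eigenvalue $\mu_i$ of $\mathcal{A}$ obeys $|\mu_i-\mu|\leqslant\a$.

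For the second part, fix $d>\a$ and let $P$ be the orthogonal projection of $H$ onto the closed linear span of those $e_k$ with $|\mu_k-\mu|\leqslant d$; then $\operatorname{Ran}P$ is exactly the subspace spanned by the eigenvectors of $\mathcal{A}$ associated with eigenvalues in $[\mu-d,\mu+d]$. Retaining in the displayed sum only the indices with $|\mu_k-\mu|>d$ yields $d^2\|(I-P)u\|_H^2\leqslant\a^2$, i.e.\ $\|(I-P)u\|_H\leqslant\a/d<1$; consequently $\|Pu\|_H^2=1-\|(I-P)u\|_H^2\geqslant1-\a^2 d^{-2}>0$, so that $\overline{u}:=Pu/\|Pu\|_H$ is well defined, has unit norm, and belongs to the required span. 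Finally, writing $v=Pu$, $w=(I-P)u$ and using $\|v\|_H\leqslant1$ together with the elementary inequality $1-\sqrt{1-t}\leqslant t$ valid for $t\in[0,1]$,
\begin{equation*}
\|u-\overline{u}\|_H\leqslant\|w\|_H+\bigl|1-\|v\|_H^{-1}\bigr|\,\|v\|_H=\|w\|_H+\bigl(1-\|v\|_H\bigr)\leqslant\|w\|_H+\|w\|_H^2\leqslant2\|w\|_H\leqslant2\a d^{-1},
\end{equation*}
which is precisely the claimed estimate.

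I do not expect any genuine obstacle. The only point demanding a little care is the use of the spectral theorem when $0$ lies in the essential spectrum of $\mathcal{A}$, so that $\ker\mathcal{A}$ may be infinite‑dimensional and $0$ need not be an isolated eigenvalue of finite multiplicity; this is dealt with by choosing the orthonormal basis of eigenvectors compatibly with the decomposition $H=\ker\mathcal{A}\oplus\overline{\operatorname{Ran}\mathcal{A}}$, after which all the sums and projections above are legitimate and everything else is routine bookkeeping with the spectral expansion.
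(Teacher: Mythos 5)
Your proof is correct. Note that the paper itself does not prove Lemma~\ref{lm3.3}: it is quoted verbatim from \cite[Ch.~III, Sec.~1.1, Lemma~1.1]{IOS}, which in turn is a reformulation of Lemmas~12,~13 of \cite{VL}, and the authors explicitly say they include it only for the reader's convenience. Your argument via the spectral theorem for compact self-adjoint operators is precisely the standard proof found in those references: expand $u$ in an orthonormal eigenbasis, get the first claim from $\sum_k|\mu_k-\mu|^2|c_k|^2\leqslant\a^2$ by contradiction, and get the second by projecting onto the eigenspaces with $|\mu_k-\mu|\leqslant d$ and normalizing, with the elementary estimate $1-\sqrt{1-t}\leqslant t$ yielding the factor $2$. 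The bookkeeping (handling $\ker\mathcal{A}$ via $H=\ker\mathcal{A}\oplus\overline{\operatorname{Ran}\mathcal{A}}$, using $\|(I-P)u\|_H\leqslant\a/d<1$ so that $Pu\neq0$, and using $\|(I-P)u\|_H\leqslant1$ to pass from the square to the first power) is all in order. One small remark: the hypothesis $\mu>0$ is never used in your proof, and indeed the lemma holds for any real $\mu$; the paper only needs the positive case in its application.
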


Let $\e$ be small enough. Denote
$\widehat{\psi}_{\e,N}^{(n,m)}:=\mathrm{p}_\e^{1/2}\psi_{\e,N}^{(n,m)}$. We
rewrite (\ref{3.45}) as
\begin{equation}\label{3.35}
\mathrm{p}_\e^{-1/2} \widehat{\mathcal{H}}_\e\mathrm{p}_\e^{-1/2}
\widehat{\psi}_{\e,N}^{(n,m)}=\l_{\e,N}^{(n,m)}\widehat{\psi}_{\e,N}^{(n,m)}
+\mathrm{p}_\e^{-1/2}h_{\e,N}^{(n,m)}.
\end{equation}
The identities (\ref{2.2a}), (\ref{2.2}), (\ref{2.3}) imply that the
operator $\widehat{\mathcal{H}}_\e$ is self-adjoint and
non-negative. The same is obviously true for $\mathrm{p}_\e^{-1/2}
\widehat{\mathcal{H}}_\e \mathrm{p}_\e^{-1/2}$. Let $\d>0$ be a
positive number. Then the operator $\mathcal{A}_\e:=(1+\d
\mathrm{p}_\e^{-1/2} \widehat{\mathcal{H}}_\e
\mathrm{p}_\e^{-1/2})^{-1}$ is well-defined as an operator in
$L_2(\Om)$, is bounded and self-adjoint, and satisfies the estimate
\begin{equation}\label{3.36}
\|\mathcal{A}_\e\|\leqslant 1.
\end{equation}
As it follows from (\ref{2.2}), for any $f\in L_2(\Om)$ the function
$v=\mathcal{A}_\e f$ is a generalized solution to the boundary value
problem
\begin{align*}
-\d&\mathrm{p}_\e^{-1/2} \sum\limits_{i,j=1}^{3}
\frac{\p}{\p\xi_i}\widetilde{A}_{ij}^{(\e)}
\frac{\p}{\p\xi_j}\mathrm{p}_\e^{-1/2} v +v=f\quad\text{in}\quad \Om,\qquad
v=0\quad\text{on}\quad\p\Om.
\\
&\xi_1:=s,\quad  \widetilde{A}_{11}^{(\e)}:=A_{11}^{(\e)},\quad
\widetilde{A}_{i1}^{(\e)}:=\widetilde{A}_{1i}^{(\e)}:=\e^{-1}
A_{i1}^{(\e)},\quad \widetilde{A}_{ij}^{(\e)}:=\e^{-2} A_{ij}^{(\e)},\quad
i,j=2,3.
\end{align*}
Hence, the operator $\mathcal{A}_\e$ is also bounded as that from
$L_2(\Om)$ into $\H^1(\Om)$. In view of the compact embedding of
$\H^1(\Om)$ in $L_2(\Om)$ the operator $\mathcal{A}_\e$ is compact
as that in $L_2(\Om)$.

We rewrite (\ref{3.35}) as
\begin{equation*}
(1+\d\l_{\e,N}^{(n,m)})^{-1}\widehat{\psi}_{\e,N}^{(n,m)}
=\mathcal{A}_\e \widehat{\psi}_{\e,N}^{(n,m)}+\d
\widehat{h}_{\e,N}^{(n,m)},\quad
\widehat{h}_{\e,N}^{(n,m)}:=(1+\d\l_{\e,N}^{(n,m)})^{-1}
\mathcal{A}_\e \mathrm{p}_\e^{-1/2} h_{\e,N}^{(n,m)}.
\end{equation*}
It follows from (\ref{3.24a}), (\ref{3.46}), (\ref{3.36}) that we
can choose $\d=\d(\e,N,n,m)$ so that
\begin{equation}\label{3.38}
1\leqslant 1+\d\l_{\e,n}^{(n,m)}\leqslant 2, \quad
\frac{\|\widehat{h}_{\e,N}^{(n,m)}\|_{L_2(\Om)}}{
\|\widehat{\psi}_{\e,N}^{(n,m)}\|_{L_2(\Om)}} \leqslant
\widehat{C}_N^{(n,m)}\d\e^{N-1}\leqslant \frac{1}{2},
\end{equation}
where $\widehat{C}_N^{(n,m)}$ are some constants independent of
$\e$.

We apply Lemma~\ref{lm3.3} with
\begin{equation}\label{3.38a}
\begin{aligned}
&H=L_2(\Om),\quad \mathcal{A}=\mathcal{A}_\e,\quad
\mu=(1+\d\l_{\e,N}^{(n,m)})^{-1},
\\
&u=\frac{\widehat{\psi}_{\e,N}^{(n,m)}}
{\|\widehat{\psi}_{\e,N}^{(n,m)}\|_{L_2(\Om)}}, \quad \a=
\widehat{C}_N^{(n,m)}\e^{N-1}\d,
\end{aligned}
\end{equation}
and conclude that there exists an eigenvalue $\mu_{\e,N}^{(n,m)}$ of
$\mathcal{A}_\e$ such that
\begin{equation}\label{3.39}
|\mu_{\e,N}^{(n,m)}-(1+\d\l_{\e,N}^{(n,m)})^{-1}|\leqslant
\widehat{C}_N^{(n,m)}\d\e^{N-1},
\end{equation}
where $C_N^{(n,m)}$ are some positive constants independent of $\e$.
It is clear that
$\widetilde{\l}_{\e,N}^{(n,m)}:=\big((\mu_{\e,N}^{(n,m)})^{-1}-1\big)\d^{-1}$
is an eigenvalue of $\mathrm{p}_\e^{-1/2}\widehat{\mathcal{H}}_\e
\mathrm{p}_\e^{-1/2}$. It follows from (\ref{3.38}), (\ref{3.39})
that
\begin{equation*}
\frac{1}{1+\d\widetilde{\l}_{\e,N}^{(n,m)}}\geqslant \frac{1}{1+\d
\l_{\e,N}^{(n,m)}}-\widehat{C}_N^{(n,m)}\e^{N-1}\d\geqslant
\frac{1}{2},\quad 1+\d\widetilde{\l}_{\e,N}^{(n,m)}\geqslant 2.
\end{equation*}

The last inequality and (\ref{3.38}), (\ref{3.39}) imply
\begin{gather}
\big|(1+\d\widetilde{\l}_{\e,N}^{(n,m)})-
(1+\d\l_{\e,N}^{(n,m)})\big|\leqslant
\widehat{C}_N^{(n,m)}\d\e^{N-1}
|1+\d\widetilde{\l}_{\e,N}^{(n,m)}|\, |1+\d\l_{\e,N}^{(n,m)}|,
\nonumber
\\
|\widetilde{\l}_{\e,N}^{(n,m)})- \l_{\e,N}^{(n,m)}|\leqslant
4\widehat{C}_N^{(n,m)}\d\e^{N-1}. \label{3.40}
\end{gather}
By $\e^{(n,m)}_N$ we denote a monotonically decreasing (in $N$)
sequence such that
\begin{equation*}
\widehat{C}_N^{(n,m)}\e\leqslant \widehat{C}_{N-1}^{(n,m)}\quad
\text{as}\quad \e\leqslant \e_N^{(n,m)}.
\end{equation*}
Letting
\begin{equation*}
\l_\e^{(n,m)}:=\widetilde{\l}_{\e,N}^{(n,m)}\quad \text{for}\quad
\e\in[\e_N^{(n,m)},\e_{N+1}^{(n,m)}),
\end{equation*}
and employing (\ref{3.40}), we see that the eigenvalue
$\l_\e^{(n,m)}$ of $\mathrm{p}_\e^{-1/2}\widehat{\mathcal{H}}_\e
\mathrm{p}_\e^{-1/2}$ has the asymptotic expansion (\ref{1.1}). To
complete the proof it remains to note that the eigenvalues of
$\mathrm{p}_\e^{-1/2}\widehat{\mathcal{H}}_\e \mathrm{p}_\e^{-1/2}$
coincide with those of $\mathcal{H}_\e$.

\section{Proof of Theorem~\ref{th1.2}}

We begin the proof with the result of Theorem~4.4 in \cite{BMT}.
Namely, the item~(ii) of this theorem says that given any $M>0$,
there exists $\e_0(M)>0$ so that for all $\e<\e_0$ the first $M$
eigenvalues $\l_m(\e)$, $m=1,\ldots,M$, of $\mathcal{H}_\e$ taken
counting multiplicities satisfy the asymptotics
\begin{equation}\label{4.0a}
\l_m(\e)=\e^{-2}\l_1+\l_0^{(1,m)}+o(1),\quad m=1,\ldots,M.
\end{equation}
Since the eigenvalues $\l_0^{(1,m)}$ of $\mathcal{L}_1$ are simple,
the same is true for the eigenvalues $\l_m(\e)$.

It follows from (\ref{4.0a}) that there exists a fixed number
$\tht>0$ so that for each $m=1,\ldots,M$, and all $\e<\e_0(M)$ the
interval
\begin{equation}\label{4.0b}
(\e^{-2}\l_1+\l_0^{(1,m)}-\tht,\e^{-2}\l_1+\l_0^{(1,m)}+\tht)
\end{equation}
contains exactly one eigenvalue of $\mathcal{H}_\e$ which is
$\l_m(\e)$. In accordance with Theorem~\ref{th1.1} the eigenvalues
$\l^{(1,m)}_\e$ of $\mathcal{H}_\e$ satisfy the same asymptotics as
$\l_m(\e)$. Hence, for sufficiently small $\e_0(M)$ and  all
$m=1,\ldots,M$ each of the intervals (\ref{4.0b}) contains the
eigenvalue $\l^{(1,m)}(\e)$. Therefore, $\l^{(1,m)}(\e)=\l_m(\e)$,
and it proves the statement of the theorem on the eigenvalues.

To prove the statement on the eigenfunctions, we adopt the same
notations as in the proof of Theorem~\ref{th1.1}. We again apply
Lemma~\ref{lm3.3} with (\ref{3.38a}) and we take $d=\e^{N/2}$. Then
there exists a linear combination $\widetilde{\psi}_{\e,N}^{(1,m)}$
of the eigenfunctions associated with the eigenvalues of
$\mathcal{A}_\e$ lying in the segment
\begin{equation}\label{4.2}
[(1+\d\l_{\e,N}^{(1,m)})^{-1}-\e^{N/2},(1+\d\l_{\e,N}^{(1,m)})^{-1}+\e^{N/2}]
\end{equation}
such that
\begin{equation}\label{4.1}
\big\|\widetilde{\psi}_{\e,N}^{(1,m)}\|
\mathrm{p}_\e^{1/2}\psi_{\e,N}^{(1,m)} \|_{L_2(\Om)} -
\mathrm{p}_\e^{1/2}\psi_{\e,N}^{(1,m)}\big\|_{L_2(\Om)}\leqslant
C_N^{(m)}\e^{N/2-1}\d,
\end{equation}
where $C_N^{(m)}$ are positive constants independent of $\e$ and
$\d$. We choose $\d=\e^2$ and by the asymptotics (\ref{1.1}) for
$\l_\e^{(1,m)}$ we obtain the estimate
\begin{equation*}
\left| \frac{1}{1+\d\l_\e^{(1,m)}}- \frac{1}{1+\d\l_\e^{(1,p)}}
\right|=\frac{\d|\l_\e^{(1,m)}-\l_\e^{(1,p)}|}{|1+\d\l_\e^{(1,m)}|\,
|1+\d\l_\e^{(1,p)}|}\geqslant C\e^2
\end{equation*}
for $\e$ small enough, $m,p=1,\ldots,M$, $m\not=p$,  where $C$ is a
positive constant independent of $\e$, $m$, and $p$. Hence, for
$N\geqslant 5$, $m=1,\ldots,M$, and $\e$ small enough the intervals
(\ref{4.2}) contain no eigenvalues of $\mathcal{A}_\e$ except
$(1+\e^2\l_\e^{(1,m)})^{-1}$. This eigenvalue is simple, since the
corresponding eigenvalue $\l_\e^{(1,m)}$ is simple. Thus, the linear
combination $\widetilde{\psi}_{\e,N}^{(1,m)}$ is an orthonormalized
in $L_2(\Om)$ eigenfunction associated with
$(1+\e^2\l_\e^{(1,m)})^{-1}$. Moreover, it is independent of $N$.

By the defintion of $\widehat{\psi}_\e^{(1,m)}$ and
Lemma~\ref{lm3.1} we have
\begin{equation*}
\| \mathrm{p}_\e^{1/2}\psi_{\e,N}^{(1,m)} \|_{L_2(\Om)}=
\sum\limits_{j=0}^{N} c_j^{(m)}\e^j+\Odr(\e^{N+1}),\quad
c_j^{(m)}=const,
\end{equation*}
for all $N\geqslant 0$, $m=1,\ldots,M$. Hence, there exists a
function $c_m=c_m(\e)$ such that
\begin{equation*}
c_m(\e)=\| \mathrm{p}_\e^{1/2}\psi_{\e,N}^{(1,m)}
\|_{L_2(\Om)}+\Odr(\e^{N+1})\quad\text{for all}\quad N\geqslant 0.
\end{equation*}
The identity obtained and (\ref{4.1}) yield
\begin{equation*}
\|\widetilde{\psi}_{\e,*}^{(1,m)}-\mathrm{p}_\e^{1/2}
\psi_{\e,N}^{(1,m)}\|_{L_2(\Om)} =\Odr(\e^{N/2+1}),\quad
\widetilde{\psi}_{\e,*}^{(1,m)}:=c_m(\e)\widetilde{\psi}_{\e,N}^{(1,m)}.
\end{equation*}
Denoting $\psi_\e^{(1,m)}(s,\xi):=\mathrm{p}_\e^{-1/2}(s,\xi)
\widetilde{\psi}_{\e,*}^{(1,m)}(s,\xi)$, we can rewrite the last
equation as
\begin{equation}\label{4.9}
\|\Phi_{\e,N}^{(m)}\|_{L_2(\Om)}=\Odr(\e^{N/2+1}),\quad
\Phi_{\e,N}^{(m)}:=\psi_\e^{(1,m)}-\psi_{\e,N}^{(1,m)}.
\end{equation}
It is also clear that the function $\psi_\e^{(1,m)}$ solves
(\ref{2.4}) with $\l_\e=\l_\e^{(1,m)}$. Hence, by (\ref{3.45})
\begin{equation}\label{4.9a}
\widehat{\mathcal{H}}_\e\Phi_{\e,N}^{(m)}=
\l_\e^{(1,m)}\mathrm{p}_\e\Phi_{\e,N}^{(m)}
+\widetilde{h}_{\e,N}^{(m)},\quad
\widetilde{h}_{\e,N}^{(m)}:=(\l_\e^{(1,m)}-
\l_{\e,N}^{(1,m)})\mathrm{p}_\e\psi_{\e,N}^{(1,m)}-h_{\e,N}^{(1,m)}.
\end{equation}
Due to this equation we can write the integral identity
\begin{equation*}
(\widehat{\mathcal{H}}_\e\Phi_{\e,N}^{(m)},
\Phi_{\e,N}^{(m)})_{L_2(\Om)} =
\l_\e^{(1,m)}(\mathrm{p}_\e\Phi_{\e,N}^{(m)},\Phi_{\e,N}^{(m)})_{L_2(\Om)}
+(\widetilde{h}_{\e,N}^{(m)}, \Phi_{\e,N}^{(m)})_{L_2(\Om)}.
\end{equation*}
From (\ref{4.9}), (\ref{3.46}), and (\ref{1.1}) we derive
\begin{equation}\label{4.11}
\|\widetilde{h}_{\e,N}^{(m)}\|_{C^k(\overline{\Om})}=\Odr(\e^{N+1}),\quad
N\geqslant 5, \quad k\geqslant 0.
\end{equation}
Together with (\ref{2.2}), (\ref{2.3}), (\ref{4.9}) it gives
\begin{equation}\label{4.12}
\|\nabla_{(s,\xi)}\Phi_{\e,N}^{(m)}\|_{L_2(\Om)}^2\leqslant C
(\mathrm{A}^{(\e)}\nabla_{(s,\xi)}\Phi_{\e,N}^{(m)},
\nabla_{(s,\xi)}\Phi_{\e,N}^{(m)})_{L_2(\Om)}=\Odr(\e^{N-4}), \quad
m=1,\ldots,M.
\end{equation}
Combining (\ref{4.9}) and (\ref{4.12}), we conclude that the
asymptotics (\ref{1.3}) hold true in $\H^1(\Om)$-norm.

We proceed to the proof of (\ref{1.3}) in $C^k(\Om^{(t)})$-norm.
First we note that by the standard smoothness improving theorems we
have $\psi_\e^{(1,m)}\in C^\infty(\overline{\Om^{(t)}})$ for all
$t\in(0,s_0/2)$. The rest of the proof follows from (\ref{4.9}),
(\ref{4.9a}), (\ref{4.11}), (\ref{4.12}), the embedding of
$\H^{k+2}(\Om)$ into $C^k(\overline{\Om})$, and from the next lemma
applied to $\Phi_{\e,N}^{(m)}$.

\begin{lmm}\label{lm4.1}
Let $u_\e$ be a solution to the equation
\begin{equation*}
\widehat{\mathcal{H}}_\e u_\e=\l_\e^{(1,m)} \mathrm{p}_\e
u_\e+h,\quad h\in C^\infty(\overline{\Om}).
\end{equation*}
Then $u_\e\in C^\infty(\overline{\Om^{(t)}})$ for all $t\in
(0,s_0/2)$, and
\begin{equation}\label{4.18}
\|u_\e\|_{\H^k(\overline{\Om^{(t)}})} \leqslant C_k\e^{-2(k-1)}
\Big(\|u_\e\|_{\H^1(\Om)}+\|h\|_{\H^{k-2}(\Om)}\Big)
\end{equation}
for all $k\geqslant 2$, $t\in(0,s_0/2)$, where the constants $C_k$
are independent of $\e$, $h$ and $u_\e$.
\end{lmm}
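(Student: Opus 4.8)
The plan is to reduce the estimate (\ref{4.18}) to a standard elliptic regularity bound by a local rescaling of the longitudinal variable that restores uniform ellipticity, and afterwards to transform the norms back while tracking the powers of $\e$. So, first I would fix a point $s_*\in[t,s_0-t]$ and, for $\e$ so small that $(s_*-\e,s_*+\e)\subset(0,s_0)$, introduce the stretched variable $\sigma:=(s-s_*)/\e$ and the function $v(\sigma,\xi):=u_\e(s_*+\e\sigma,\xi)$ on the fixed cylinder $Q_1:=(-1,1)\times\om$. Multiplying the equation for $u_\e$ by $\e^2$ and using $\p_s=\e^{-1}\p_\sigma$, one checks from (\ref{2.2}), (\ref{2.3}) that $v$ satisfies
\begin{equation*}
\widehat{\mathcal{L}}_\e v:=-\Big(\frac{\p}{\p\sigma}A_{11}^{(\e)}\frac{\p}{\p\sigma}+\sum_{i=2}^{3}\frac{\p}{\p\xi_i}A_{i1}^{(\e)}\frac{\p}{\p\sigma}+\sum_{i=2}^{3}\frac{\p}{\p\sigma}A_{1i}^{(\e)}\frac{\p}{\p\xi_i}+\sum_{i,j=2}^{3}\frac{\p}{\p\xi_i}A_{ij}^{(\e)}\frac{\p}{\p\xi_j}\Big)v=\e^2\l_\e^{(1,m)}\mathrm{p}_\e\,v+\e^2 h
\end{equation*}
in $Q_1$, with all the coefficients evaluated at $(s_*+\e\sigma,\xi)$ and $v=0$ on $(-1,1)\times\p\om$. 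Since the coefficient matrix from (\ref{2.3}) equals $\mathrm{I}+\Odr(\e)$ uniformly on $\overline{Q_1}$, the operator $\widehat{\mathcal{L}}_\e$ is uniformly elliptic for $\e$ small; moreover its coefficients, the factor $\mathrm{p}_\e=1-\e\mathrm{q}$, and the number $\e^2\l_\e^{(1,m)}=\l_1+\Odr(\e^2)$ (by (\ref{1.1})) are bounded together with all their $(\sigma,\xi)$-derivatives, uniformly in $\e$ and in $s_*$. Moving the zero-order term to the left-hand side and applying the standard interior-plus-boundary $\H^k$-estimate on the cylinder --- interior in $\sigma$, up to the smooth Dirichlet portion $(-1,1)\times\p\om$, the latter isolated by a cut-off in $\sigma$ --- I get for every $k\geqslant2$
\begin{equation*}
\|v\|_{\H^k(Q_{1/2})}\leqslant C_k\Big(\|v\|_{L_2(Q_1)}+\e^2\|h(s_*+\e\,\cdot\,,\cdot)\|_{\H^{k-2}(Q_1)}\Big),\qquad Q_{1/2}:=(-1/2,1/2)\times\om,
\end{equation*}
with $C_k$ independent of $\e$ and $s_*$; bootstrapping it yields $v\in C^\infty(\overline{Q_{1/2}})$, hence the asserted smoothness $u_\e\in C^\infty(\overline{\Om^{(t)}})$.

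Next I would undo the scaling. Putting $\Pi_r(s_*):=(s_*-r,s_*+r)\times\om$ and using $\p_\sigma=\e\p_s$, $\di\sigma=\e^{-1}\di s$, one has for $|a|\leqslant k$ the relations $\|\p_s^a\p_\xi^b u_\e\|_{L_2(\Pi_{\e/2}(s_*))}^2=\e^{1-2|a|}\|\p_\sigma^a\p_\xi^b v\|_{L_2(Q_{1/2})}^2$, together with $\|v\|_{L_2(Q_1)}^2=\e^{-1}\|u_\e\|_{L_2(\Pi_\e(s_*))}^2$ and $\|h(s_*+\e\,\cdot\,,\cdot)\|_{\H^{k-2}(Q_1)}^2\leqslant\e^{-1}\|h\|_{\H^{k-2}(\Pi_\e(s_*))}^2$. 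Substituting these into the last display and using $\e<1$ I obtain the local bound
\begin{equation*}
\|u_\e\|_{\H^k(\Pi_{\e/2}(s_*))}\leqslant C_k\Big(\e^{-k}\|u_\e\|_{L_2(\Pi_\e(s_*))}+\e^{2-k}\|h\|_{\H^{k-2}(\Pi_\e(s_*))}\Big),
\end{equation*}
which is, locally, even slightly stronger than (\ref{4.18}) because $\e^{-k}\leqslant\e^{-2(k-1)}$ and $\e^{2-k}\leqslant\e^{-2(k-1)}$ for $k\geqslant2$, $\e<1$. Finally, I would cover $[t,s_0-t]$ by the intervals $(c_j-\e/2,c_j+\e/2)$ with $c_j:=t+(j-1)\e/4$, observe that the doubled intervals $(c_j-\e,c_j+\e)$ overlap with multiplicity bounded by an absolute constant, sum the squares of the local inequalities over $j$, and bound the resulting sums of $L_2(\Pi_\e(c_j))$- and $\H^{k-2}(\Pi_\e(c_j))$-norms by $\|u_\e\|_{L_2(\Om)}\leqslant\|u_\e\|_{\H^1(\Om)}$ and by $\|h\|_{\H^{k-2}(\Om)}$; this gives (\ref{4.18}).

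The routine but essential technical point is the $\e$-uniformity of the constant $C_k$ in the rescaled estimate: one has to check that the ellipticity constant of $\widehat{\mathcal{L}}_\e$ and the $C^k(\overline{Q_1})$-norms of its coefficients (and of the zero-order coefficient $\e^2\l_\e^{(1,m)}\mathrm{p}_\e$) remain bounded as $\e\to0$, which is immediate from the explicit form (\ref{2.3}) of the matrix $\mathrm{A}^{(\e)}$, the identity $\mathrm{p}_\e=1-\e\mathrm{q}$, and the asymptotics (\ref{1.1}) for $\l_\e^{(1,m)}$. Everything else is bookkeeping of powers of $\e$ and the standard handling of the cylinder geometry.
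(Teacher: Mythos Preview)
Your proof is correct and follows a genuinely different route from the paper's. The paper argues by hand: it multiplies $u_\e$ by a longitudinal cutoff $\chi_1^{(t)}$, differentiates the resulting equation in the tangential directions (first in $s$, then, after a further cutoff $\chi_2$ near $\p\om$, in boundary-fitted coordinates $(\z_1,\z_2,\z_3)$), uses the quadratic form of $\widehat{\mathcal{H}}_\e$ to control the gradient of each tangential derivative, and finally reads the normal second derivative $\p_{\z_3}^2$ off the equation itself; iterating gains one order and one factor $\e^{-2}$ at each step. Your argument instead exploits the anisotropy of $\widehat{\mathcal{H}}_\e$ directly: the stretching $\sigma=(s-s_*)/\e$ turns the coefficient matrix (\ref{2.3}) into $\mathrm{I}+\Odr(\e)$, so a single application of standard interior-plus-lateral-boundary $\H^k$ regularity on the fixed cylinder $Q_1$ does all the work, and the $\e$-powers appear transparently from the change-of-variable Jacobians; the bounded-overlap covering then globalizes. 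This is shorter and even yields the sharper factor $\e^{-k}$ (rather than $\e^{-2(k-1)}$) in front of $\|u_\e\|_{L_2}$, while the paper's approach is more self-contained in that it does not invoke elliptic regularity as a black box. One small point you leave implicit: the doubled intervals $(c_j-\e,c_j+\e)$ must stay inside $(0,s_0)$, which forces $\e<t$; thus the constant in (\ref{4.18}) depends on $t$, consistent with the statement (which only requires independence of $\e$, $h$, $u_\e$) and with the paper's own proof via the cutoff $\chi_1^{(t)}$.
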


\begin{proof}
The smoothness of $u_\e$ follows from that of $A_{ij}^{(\e)}$, $h$,
$\mathrm{p}_\e$, and the smoothness improving theorems. For the sake
of brevity we denote $\xi_1:=s$, $\xi:=(\xi_1,\xi_2,\xi_3)$.

Let $\chi_1^{(t)}=\chi_1^{(t)}(\xi_1)$ be an infinitely
differentiable cut-off function equalling one as $\xi_1\in[t,s_0-t]$
and vanishing for $\xi_1\in[0,t/2]\cup[s_0-t/2,s_0]$. We fix
$t\in(0,s_0/2)$ and denote
\begin{equation*}
u_\e^{(t)}(\xi):=\chi_1^{(t)}(\xi_1)u_\e(\xi).
\end{equation*}
It is straightforward to check that $u_\e^{(t)}$ solves the equation
\begin{equation}\label{4.20}
\widehat{\mathcal{H}}_\e u_\e^{(t)}=\l_\e^{(1,m)}\mathrm{p}_\e
u_\e^{(t)} +h\chi_1^{(t)}+\e^{-2}G_1^{(t)}\left(\e,\xi,u_\e,\frac{\p
u_\e }{\p\xi_i}\right).
\end{equation}
The symbol $\frac{\p u_\e }{\p\xi_i}$ in the arguments of
$G_1^{(t)}$ indicates that this function depends on all first
derivatives of $u_\e$. The function $G_1^{(t)}$ is linear with
respect to $u_\e$ and the first derivatives of $u_\e$. The
coefficients at $u_\e$ and at the first derivatives of $u_\e$ belong
to $C^\infty(\overline{\Om})$ and are bounded in
$C^\k(\overline{\Om})$-norms uniformly in $\e$ for all $k\geqslant
0$. We differentiate (\ref{4.20}) with respect to $\xi_1$,
\begin{equation}\label{4.21}
\begin{aligned}
\widehat{\mathcal{H}}_\e \frac{\p u_\e^{(t)}}{\p\xi_1}
=&\l_\e^{(1,m)}\mathrm{p}_\e \frac{\p u_\e^{(t)}}{\p \xi_1}+
\e^{-2}G_2^{(t)}\left(\e,\xi,u_\e^{(t)},\frac{\p u_\e^{(t)}
}{\p\xi_i},\frac{\p^2 u_\e^{(t)}
}{\p\xi_i\p\xi_j}\right)
\\
&+\frac{\p}{\p \xi_1}
\left(h\chi_1^{(t)}+\e^{-2}G_1^{(t)}\left(\e,\xi,u_\e,\frac{\p u_\e
}{\p\xi_i}\right)\right).
\end{aligned}
\end{equation}
The symbols $\frac{\p u_\e^{(t)} }{\p\xi_i}$, $\frac{\p^2 u_\e^{(t)}
}{\p\xi_i\p\xi_j}$ in the arguments of $G_1^{(t)}$ indicates that
this function depends on all first and second derivatives of
$u_\e^{(t)}$. The function $G_1^{(t)}$ is linear with respect to
$u_\e$ and the first derivatives of $u_\e$. The coefficients at
$u_\e$ and at the first derivatives of $u_\e$ belong to
$C^\infty(\overline{\Om})$ and are bounded in
$C^\k(\overline{\Om})$-norms uniformly in $\e$ for all $k\geqslant
0$.

Starting from (\ref{4.21})  and proceeding as in (\ref{4.12}), we
obtain
\begin{equation}\label{4.22}
\begin{aligned}
\Big\|\nabla  &\frac{\p u_\e^{(t)}}{\p\xi_1}
\Big\|_{L_2(\Om)}^2\leqslant  C\left(\mathrm{A}^{(\e)}\nabla
\frac{\p u_\e^{(t)}}{\p\xi_1}, \nabla \frac{\p u_\e^{(t)}}{\p\xi_1
}\right)_{L_2(\Om)}
\\
=&C\l_\e^{(1,m)}\left(\mathrm{p}_\e\frac{\p u_\e^{(t)}}{\p\xi_1},
\frac{\p u_\e^{(t)}}{\p\xi_1}\right)_{L_2(\Om)}
\\
&-C \left( h\chi_1^{(t)}+\e^{-2}G_1^{(t)}\left(\e,\xi,u_\e,\frac{\p
u_\e }{\p\xi_i}\right),\frac{\p^2 u_\e^{(t)}}{\p\xi_1^2}
\right)_{L_2(\Om)}
\\
&+\e^{-2}\left( G_2^{(t)}\left(\e,\xi,u_\e^{(t)},\frac{\p u_\e^{(t)}
}{\p\xi_i},\frac{\p^2 u_\e^{(t)} }{\p\xi_i\p\xi_j}\right),\frac{\p
u_\e^{(t)}}{\p\xi_1} \right)_{L_2(\Om)}
\\
\leqslant&
C\e^{-4}\left(\|h\|_{L_2(\Om)}^2+\|u_\e\|_{\H^1(\Om)}^2\right)+\frac{1}{2}
\Big\|\nabla  \frac{\p u_\e^{(t)}}{\p\xi_1} \Big\|_{L_2(\Om)}^2.
\end{aligned}
\end{equation}
Here and till the end of the proof by $C$ we indicate non-specific
constants independent of $\e$, $u_\e$ and $h$. The obtained estimate
implies
\begin{equation}\label{4.22a}
\Big\|\nabla \frac{\p u_\e^{(t)}}{\p\xi_1} \Big\|_{L_2(\Om)}
\leqslant C\e^{-2}\left(\|h\|_{L_2(\Om)} +\|u_\e\|_{\H^1(\Om)}
\right).
\end{equation}

Let us estimate $\Big\|\nabla \frac{\p u_\e^{(t)}}{\p\xi_i}
\Big\|_{L_2(\Om)}$, $i=2,3$. We could have tried to differentiate
(\ref{4.20}) with respect to $\xi_i$ and proceed as above. However,
the function $\frac{\p u_\e^{(t)}}{\p\xi_i}$ does not vanish on
$(0,s_0)\times\p\om$ and this is the main difficulty. This is why we
have to employ a slightly different trick. We introduce an
infinitely differentiable in $\overline{\om}$ cut-off function
$\chi_2=\chi_2(\xi)$ equalling one in a small neighborhood of
$\p\om$ and vanishing outside a bigger neighborhood. Writing the
equation for $u_\e^{(t)}(1-\chi_2)$ similar to (\ref{4.20}) and
proceeding as in (\ref{4.21}), (\ref{4.22}), one can show that
\begin{equation}\label{4.23}
\left\|\frac{\p}{\p\xi_i} u_\e^{(t)}(1-\chi_2)\right\|_{\H^1(\Om)}
\leqslant C\e^{-2}\big(\|h\|_{L_2(\Om)}+\|u_\e\|_{\H^1(\Om)} \big).
\end{equation}

In a small neighborhood of $\p\om$ we introduce new variables
$\z=(\z_1,\z_2,\z_3)$, where $\z_1=\xi_1$, $\z_2$ is the arc length
of $\p\om$ and $\z_3$ is the distance from the point to $\p\om$
measured in the direction of the inward normal. Then it follows from
(\ref{4.20}) that the function $v_\e^{(t)}=\chi_2 u_\e^{(t)}$
satisfies the boundary value problem
\begin{equation}\label{4.24}
\begin{aligned}
-\sum\limits_{i,j=1}^{3} B_{ij}^{(\e)} \frac{\p^2
v_\e^{(t)}}{\p\z_i\p \z_j}=&h\chi_2\chi_1^{(t)} +\e^{-2}\chi_2
G_1^{(t)}+B_0^{(\e)}u_\e^{(t)} +\sum\limits_{i=1}^{3} B_i^{(\e)}
\frac{\p u_\e^{(t)}}{\p\z_i},
\\
&\z\in(0,s_0)\times(0,z_2^{(0)})\times(0,\z_3^{(0)}),
\end{aligned}
\end{equation}
where $\z_2^{(0)}$ is the length of $\p\om$, $\z_3^{(0)}$ is a small
fixed number. The operator in the left hand side of the last
equation is elliptic uniformly in $\z$ and $\e$. The function
$v_\e^{(t)}$ satisfies periodic boundary condition as $\z_2=0$ and
$\z_2=\z_2^{(0)}$, and vanishes as $\z_3=0$, $\z_3=\z_3^{(0)}$,
$\z_1=0$, $\z_1=s_0$. The coefficients $B_{ij}^{(\e)}$, $B_i^{(\e)}$
are infinitely differentiable and satisfy the estimates
\begin{equation*}
\|B_{ij}^{(\e)}\|_{C^1(
[0,s_0]\times[0,\z_2^{(0)}]\times[0,\z_3^{(0)}])} \leqslant
C\e^{-2},\quad \|B_i^{(\e)}\|_{C^1(
[0,s_0]\times[0,\z_2^{(0)}]\times[0,\z_3^{(0)}])} \leqslant
C\e^{-2}.
\end{equation*}

We differentiate (\ref{4.24}) with respect to $\z_2$, and in the
same fashion as in (\ref{4.21}), (\ref{4.22}), (\ref{4.22a}) we
obtain
\begin{equation}\label{4.25}
\Big\|\nabla \frac{\p v_\e^{(t)}}{\p\z_2} \Big\|_{L_2(
[0,s_0]\times[0,\z_2^{(0)}]\times[0,\z_3^{(0)}] )} \leqslant
C\e^{-2}\left(\|h\|_{L_2(\Om)} +\|u_\e\|_{\H^1(\Om)} \right).
\end{equation}
Now we express the term $\frac{\p^2 v_\e^{(t)}}{\p\z_3^2}$ from
(\ref{4.24}). Together with (\ref{4.22a}), (\ref{4.23}),
(\ref{4.25}) it gives the estimate
\begin{equation*}
\Big\| \frac{\p^2v_\e^{(t)}}{\p\z_3^2} \Big\|_{L_2(
[0,s_0]\times[0,\z_2^{(0)}]\times[0,\z_3^{(0)}] )} \leqslant
C\e^{-2}\left(\|h\|_{L_2(\Om)} +\|u_\e\|_{\H^1(\Om)} \right).
\end{equation*}
This estimate and (\ref{4.22a}), (\ref{4.23}), (\ref{4.25}) imply
(\ref{4.18}) for $k=1$. To prove it for other $k$'s, it is
sufficient to proceed as above starting with differentiating the
equations for $\frac{\p u_\e^{(t)}}{\p\xi_1}$,
$\frac{\p}{\p\z_i}(1-\chi_2)u_\e^{(t)}$, $\frac{\p}{\p\z_i}\chi_2
u_\e^{(t)}$, $i=2,3$.
\end{proof}

\section*{Acknowledgments}

We thank D. Krej\v ci\v r\'\i k and the referees for valuable
remarks which allowed us to improve the original version of the
paper.

\end{document}